\newtheorem{theorem}{Theorem}[section]
\newtheorem{corollary}[theorem]{Corollary}
\newtheorem{assumption}[theorem]{Assumption}
\theoremstyle{remark}
\newtheorem{remark}[theorem]{Remark}
\def\E{\mathbb{E}}
\definecolor{darkred}{rgb}{0.9,0.1,0.1}
\begin{document}

\title{
A patch that imparts unconditional stability \\
to certain explicit integrators for SDEs}

\author{Nawaf Bou-Rabee\thanks{Courant Institute of Mathematical
    Sciences, New York University, 251 Mercer Street, New York, NY
    10012-1185 ({\tt nawaf@cims.nyu.edu}).  N.~B-R.~was supported by
    NSF Fellowship \# DMS-0803095. }  \and Eric
  Vanden-Eijnden\thanks{Courant Institute of Mathematical Sciences,
    New York University, 251 Mercer Street, New York, NY 10012-1185
    ({\tt eve2@cims.nyu.edu}).  The work of E. V.-E. was supported in
    part by NSF grants DMS-0718172 and DMS-0708140, and ONR grant
    N00014-04-1-6046. }  }
\maketitle

\medskip

\begin{abstract}
  This paper proposes a simple strategy to simulate stochastic
  differential equations (SDE) arising in constant temperature
  molecular dynamics.  The main idea is to patch an explicit
  integrator with Metropolis accept or reject steps.  The resulting
  `Metropolized integrator' preserves the SDE's equilibrium
  distribution and is pathwise accurate on finite time intervals.  As
  a corollary the integrator can be used to estimate finite-time
  dynamical properties along an infinitely long solution.  The paper
  explains how to implement the patch (even in the presence of
  multiple-time-stepsizes and holonomic constraints), how it scales
  with system size, and how much overhead it requires.  We test the
  integrator on a Lennard-Jones cluster of particles and `dumbbells'
  at constant temperature.

\end{abstract}

\noindent {\bf Keywords}
molecular dynamics, Metropolis-Hastings, Verlet, RATTLE, RESPA

\medskip

\noindent {\bf AMS Subject Classification}
	82C80 (65C30, 65C05, 65P10)

% Primary Classification: 
%      82C80 (Numerical Methods for Time-Dependent Statistical Mechanics)
% Secondary Classification: 
%	65C30 (Numerics for SDEs)
%	65C05 (Monte Carlo Methods) 
%	65P10 (Symplectic Integrators)

%%%%%%%%%%%%%%%%%%%%%%%%%%%%%%%%%%%%%%%%%%%%%%%%%
%%%%%%%%%%%%%%%%%%%%%%%%%%%%%%%%%%%%%%%%%%%%%%%%%
%%%%%%%%%%%%%%%%%%%%%%%%%%%%%%%%%%%%%%%%%%%%%%%%%

\section{Introduction}

\paragraph{Motivation}

% It is easy to tell if a molecular system is at constant energy
% or~constant temperature. The former is governed by Hamiltonian
% ordinary differential equations (ODE) whose solution remains on an
% energy isosurface for all time.  Ergodic stochastic differential
% equations (SDE) typically govern the latter whose solution visits
% every energy isosurface with nonzero probability.  Despite this
% apparent difference, long time integration strategies for these
% systems are similar, and have been based on the Verlet algorithm.

Since Loup Verlet's landmark paper in 1967, the classical Verlet
algorithm has been the main workhorse for constant energy molecular
dynamics \cite{Ve1967}.  It is an attractive algorithm for the
Hamiltonian ODEs that arise in this context because of its
explicit, time-reversible, and symplectic nature.  In particular,
symplecticity implies long time stability of the Verlet algorithm.
The usual proof of this statement uses backward error analysis to show
that level sets of a nearby Hamiltonian function interpolate Verlet
trajectories \cite{Mo1968, BeGi1994, Re1999}.  This property implies
that a Verlet trajectory is confined to these level sets for the
duration of the simulation.  As a consequence a Verlet integrator
nearly preserves the true energy and exhibits linear growth in global
error.  Versions of Verlet to constrained (RATTLE) and multiscale
(RESPA) Hamiltonian systems are also available.  For these reasons
Verlet integrators are well-suited for long time simulation of
constant energy molecular dynamics.

The situation is quite different in the context of constant
  temperature molecular dynamics. A molecular system at constant
  temperature visits every energy isosurface with nonzero probability
  and its evolution is typically modeled using ergodic stochastic
  differential equations (SDE).  In contrast to their deterministic
counterpart, explicit integrators for SDEs diverge from the
equilibrium behavior of the true solution \cite{BoVa2010A}.  This
divergence is easy to understand since explicit integrators are only
{\em conditionally} stable.  Indeed for any time-stepsize one can find
an energy above which an explicit integrator is unstable.  As a result
stochastic effects necessarily induce instabilities by driving
trajectories to these energy values.  Since molecular simulations
often involve unbounded potential energy (e.g., Lennard-Jones
interaction), these energy values are attainable, and this issue calls
for new integration strategies for constant temperature molecular
dynamics.

%%%%%%%%%%%%%%%%%%%%%%%%%%%%%%%%%%%%%%%%%%%%%%%%%

\paragraph{Constant Temperature Molecular Dynamics}

We briefly recall what it means for a molecular system to be at
constant temperature. Consider $n$ molecules with masses $m_i$ for
$i=1,...,n$ evolving in a d-dimensional periodic box (or torus in $\nu
= d n$ dimensions $\mathbb{T}^{\nu}$).  Let $\boldsymbol{M}$ represent
the diagonal mass matrix of the molecular system.  We assume the
particle interaction is given by a potential energy function $U:
\mathbb{T}^{\nu} \to \mathbb{R}$.  The Hamiltonian $H:
\mathbb{T}^{\nu} \times \mathbb{R}^{\nu} \to \mathbb{R}$ of this
system can be written as: 
\begin{equation} 
  \label{Hamiltonian}
  H(\boldsymbol{q}, \boldsymbol{p}) = \frac{1}{2} \boldsymbol{p}^T
  \boldsymbol{M}^{-1} \boldsymbol{p} + U(\boldsymbol{q}) \;,
\end{equation}
where $\boldsymbol{q} \in \mathbb{T}^{\nu}$ and $\boldsymbol{p} \in
\mathbb{R}^{\nu}$ represent respectively the positions and momenta of
the molecules.  In terms of this Hamiltonian, define the probability
distribution $\mu$:
\begin{equation} 
  \label{EquilibriumDistribution}
  \mu(d \boldsymbol{q}, d \boldsymbol{p}) \eqdef 
  Z^{-1} e^{-\beta H(\boldsymbol{q}, \boldsymbol{p}) } d
  \boldsymbol{q} d \boldsymbol{p} \;
  \qquad
  Z = 
  \int_{  \mathbb{T}^{\nu}  \times \mathbb{R}^{\nu} } e^{-\beta H(\boldsymbol{q}, \boldsymbol{p}) }  d \boldsymbol{q} d \boldsymbol{p} \;.
\end{equation}
Here we have introduced the parameter $\beta$ which is inversely
related to the temperature $\mathcal{T}$ and Boltzmann constant $k_B$
via $\beta = 1/(k_B \mathcal{T})$.  

A molecular system with Hamiltonian \eqref{Hamiltonian} is at constant
temperature $\mathcal{T}$ if its trajectories sample from the
probability distribution \eqref{EquilibriumDistribution}.  The
  standard way to guarantee that this is indeed the case is to assume
  that the molecular system follows a continuous, stochastic dynamics
  of the form:
\begin{equation} \label{SDE}
\begin{cases}
  \displaystyle \frac{d \boldsymbol{Q}}{dt} &= \boldsymbol{M}^{-1} \boldsymbol{P} \;, \\
  d \boldsymbol{P} &= - \nabla U( \boldsymbol{Q} ) dt + d
  \boldsymbol{\eta}( \boldsymbol{Q}, \boldsymbol{P} )\;,
\end{cases}  
\end{equation}
where $d \boldsymbol{\eta}: \mathbb{T}^{\nu} \times \mathbb{R}^{\nu}
\to \mathbb{R}^{\nu}$ represents a thermostat force. Physically one
can interpret the thermostat force as modeling interaction between the
molecular system and a heat bath.  Mathematically it is
  essential that the thermostat force be stochastic to ensure
that the dynamics of \eqref{SDE} be ergodic with respect to
\eqref{EquilibriumDistribution}.  Several specific forms of the
  thermostat force have been proposed in the literature \cite{ScSt1978, BrBrKa1984,BuDoPa2007, SaChDe2007,LeNoTh2009} 
  and which one is best remains open for debate. This is a modeling
  question which is beyond the scope of the present paper.  Here we
  assume that the thermostat force is given, and we propose an
  integration strategy that does not make strong assumptions on its
  precise form.  In the applications and theory sections of this
paper we focus on $d \boldsymbol{\eta}$ given by Langevin dynamics
\cite{ScSt1978, BrBrKa1984} and in a companion paper \cite{BoVa2010B}
consider other thermostats including stochastic rescaling dynamics
\cite{BuDoPa2007,BuPa2008} and Nos\'{e}-Hoover-Langevin dynamics
\cite{SaChDe2007,LeNoTh2009}.  The assumption of continuity excludes,
e.g., the Andersen thermostat because it involves discrete collisions
at random times that in their wake leave the momentum of the molecular
system discontinuous \cite{An1980}.  We refer the reader to
\cite{Li2007, ELi2008} for recent progress quantifying the mixing
properties of the Andersen thermostat for molecular systems.

The SDE \eqref{SDE} is characterized by degenerate noise, irregular
drift, high-di\-men\-sio\-na\-li\-ty ($\nu$ is typically very big),
and non-well-separated time-scales.  In this context the main aim of
numerical methods is to estimate long-time dynamical properties.  This
calculation is typically done by launching a single run of an explicit
integrator and collecting statistics.  However, without the patch
introduced below this approach is prone to failure due to numerical
instabilities.

To be concrete consider computing the time-correlation in momentum
along an equilibrium path of the SDE.  This computation is common in
molecular dynamics and the reader is referred to \cite{AlTi1987,
  FrSm2002} for expository accounts.  Define the continuous
equilibrium correlation in momentum as:
\begin{equation} 
A( \tau ) = \left\langle \boldsymbol{P}(\tau + t)^T \boldsymbol{P}(t) \right\rangle \;, ~~~ (t \ge 0) \;, ~~~ (\tau \ge 0) \;,
\end{equation}
where the angle brackets denote a double average with respect to
realizations of \eqref{SDE} and an initial condition distributed
according to \eqref{EquilibriumDistribution}.  The usual way to
estimate $A(\tau)$ over a time interval $[0, T]$ is by a sample
average computed on-the-fly using a single run of an integrator.  The
numerical equilibrium correlation is defined as the limit as the
number of samples tends to $\infty$:
\begin{equation}
A^{h}(\tau) = \lim_{N \to \infty} \left( \frac{1}{N} \sum_{k=1}^{N}  
	\boldsymbol{P}_{\lfloor ( \tau + t_k ) / h \rfloor} ^T  
	\boldsymbol{P}_{\lfloor  t_k  / h \rfloor}  \right) \;.
\end{equation}
The difficulty is that this limit does not generally exist if
the integrator is explicit.  This divergence is an established problem
with explicit discretizations of SDEs that possess drifts of limited
regularity \cite{Ta2002, HiMaSt2002, MiTr2005}.

%%%%%%%%%%%%%%%%%%%%%%%%%%%%%%%%%%%%%%%%%%%%%%%%%

\paragraph{Proposed Integration Strategy}

This paper proposes a new integration strategy to solve the SDE
\eqref{SDE} based on combining an explicit integrator with Monte Carlo
methods to sample from the SDE's equilibrium distribution
\cite{MeRoRoTeTe1953,GoSo1989, LiSa2000}.  The resulting `Metropolized
integrator' preserves the equilibrium distribution and is often
  provably ergodic.  This feature motivates their use as sampling
methods \cite{RoTw1996A, ScLeStCaCa2006, St2007, CaLeSt2007, AkRe2008}.  In
  addition, in \cite{BoVa2010A} we showed that a Metropolized
  integrator also approximates pathwise the SDE's solution on
  finite-time intervals.

These properties ensure that a Metropolized integrator can
be used to estimate dynamics along an infinitely long solution
of \eqref{SDE}.  Indeed one can generate a long time trajectory of a
Metropolized integrator, and along any finite-time interval update
sample averages of dynamic quantities.  These averages 
converge as a consequence of ergodicity of a Metropolized
integrator.  The averages can also be made arbitrarily close to the
true solution's average by selecting the time-stepsize small
enough. For example, a Metropolized integrator can be used to
approximate to arbitrary precision the equilibrium correlation
function $A(\tau)$.  In fact, we show in this paper for every $T>0$,
there exists a $C(T)>0$ such that for $h$ sufficiently small
 \begin{equation} \label{mainresult}
 \sup_{\tau \in [0, T]} |  A^{h}(\tau) - A(h  \lfloor \tau/h \rfloor ) | \le C(T) h \;.
\end{equation}
The constant $C(T)$ increases monotonically with the length of the
time-interval $T$.  Hence, one cannot use this integrator in
situations where $T$ is very large like rare event simulation.  For
such problems the reader is referred to methods adapted to molecular
systems with rare events such as milestoning \cite{VaVe2008,
  VaVe2009}.

The error estimate \eqref{mainresult} provides a theoretical order of
accuracy of a Metropolized integrator.  However, for the strategy to
be practical, several questions remain:
\begin{itemize}
\item what does the patch involve?
\item is the patch scalable with respect to system size?  
\item does the patch work with RATTLE (Verlet with constraints) \cite{VaCi2006} or
RESPA (Verlet with multiple time-step-sizes) \cite{TuBe1991, LeMaOrWe2003}?
\end{itemize}
The aim of this paper is to answer these questions.  The paper
is organized as follows:
\begin{description}
\item[\S \ref{sec:nutshell}] provides step-by-step instructions on how to patch explicit integrators
based on Verlet, RATTLE, and RESPA, and some basic theory explaining why the patch works;
\item[\S \ref{sec:applications}] conducts numerical tests on a Lennard-Jones fluid and `dumbbell' systems;  
\item[\S \ref{sec:conclusion}] contains some conclusions and future improvements.
\end{description}

%%%%%%%%%%%%%%%%%%%%%%%%%%%%%%%%%%%%%%%%%%%%%%%%%
%%%%%%%%%%%%%%%%%%%%%%%%%%%%%%%%%%%%%%%%%%%%%%%%%
%%%%%%%%%%%%%%%%%%%%%%%%%%%%%%%%%%%%%%%%%%%%%%%%%

\section{Patch}   \label{sec:nutshell}

The patch involves splicing an explicit integrator with Metropolis steps.
We will show that the resulting integrator possesses the following properties: \begin{description}
\item[(P1)] preservation of the SDE's equilibrium distribution; and,
\item[(P2)] pathwise accuracy on finite time-intervals.
\end{description}
To illustrate how the patch works, we shall implement it on an explicit integrator based 
on splitting the SDE \eqref{SDE} into Hamilton's equations:  
\begin{equation} \label{HamiltonianPart}
\begin{cases}
  \displaystyle\frac{d \boldsymbol{Q}}{dt} &= \boldsymbol{M}^{-1} \boldsymbol{P}  \;, \\[6pt]
  \displaystyle\frac{d \boldsymbol{P}}{dt} &= - \nabla U(
  \boldsymbol{Q} ) \;,
\end{cases}
\end{equation}
and equations describing the effect of the thermostat:  
\begin{equation} \label{ThermostatPart}
\begin{cases}
\displaystyle\frac{d \boldsymbol{Q}}{dt} &= 0 \;, \\
d \boldsymbol{P} &=  d \boldsymbol{\eta} \;.
\end{cases}
\end{equation}
The explicit integrator considered is defined as a composition of a
step of Verlet for \eqref{HamiltonianPart} and a step of an
approximation to \eqref{ThermostatPart} (or vice versa).  If Verlet is
replaced by an implicit method (e.g., implicit Euler), then the
splitting will satisfy property (P2).  However, due to discretization
error, even an implicit integrator will generally fail to satisfy
property (P1) \cite{TaTu1990, Ta1995, Ta2002}.

Before we continue let us introduce some notation.  Let $\Omega =
\mathbb{T}^{\nu} \times \mathbb{R}^{\nu}$ denote the $2
\nu$-dimensional phase space of the molecular system and $
\boldsymbol{Y}(t) = (\boldsymbol{Q}(t), \boldsymbol{P}(t)) \in \Omega$
denote the true solution of \eqref{SDE} at time $t \ge 0$ with initial
condition $\boldsymbol{Y}(0) = \boldsymbol{x} \in \Omega$.  In what
follows we take for granted that this solution exists for all
time.

%%%%%%%%%%%%%%%%%%%%%%%%%%%%%%%%%%%%%%%%%%%%%%%%%

\subsection{Explicit Integrator}

Given a time-stepsize $h$ and time interval $T$, set the number of
steps to be $N = \lfloor T/ h \rfloor$ and introduce an evenly-spaced
mesh in time $t_k = h k$ for all $0 \le k \le N$.  Let $\psi_{t_{k+1},t_k}:
\Omega \to \Omega$ denote an approximation to the thermostat dynamics
\eqref{ThermostatPart}.  This map depends on time because of the
stochastic effects in the thermostat.  For example, for the
Langevin dynamics the thermostat force is given by, \[ d
\boldsymbol{\eta} = - \gamma \boldsymbol{M}^{-1} \boldsymbol{P} dt +
\sqrt{2 \gamma \beta^{-1}} d \boldsymbol{W} \;,
\]
where $\boldsymbol{W}$ is a $\nu$-dimensional Wiener process 
and $\gamma$ is a thermostat parameter \cite{ScSt1978, BrBrKa1984}. 
In this case \eqref{ThermostatPart} are Ornstein-Uhlenbeck equations in momentum whose 
pathwise unique flow is almost surely:
\begin{equation} \label{ou}
\psi_{t_{k+1}, t_k} :  (\boldsymbol{q}, \boldsymbol{p})  \mapsto  
(\boldsymbol{q}, e^{-\gamma \boldsymbol{M}^{-1} h}  \boldsymbol{p} + \boldsymbol{\eta}_{t_{k+1},t_k} )  \;,
\end{equation}
where we have introduced the random vector:
\[
\boldsymbol{\eta}_{t_{k+1},t_k} =  
\sqrt{2 \gamma \beta^{-1}}   \int_{t_k}^{t_{k+1}} 
e^{- \gamma \boldsymbol{M}^{-1} (t_{k+1} -s ) } d \boldsymbol{W}(s)  \;.
\]
The map $\psi_{t_{k+1}, t_k}$ satisfies property (P1).  For
other SDE-based thermostats an approximate map that satisfies (P1) can
be similarly constructed.  Notice that this map does not alter the
positions of the molecular system.

We introduce a second map $\hat{\theta}_h : \Omega \to \Omega$ which
approximates \eqref{HamiltonianPart}.  Since the Hamiltonian is
time-independent, this map depends only on the time-stepsize
$h=t_{k+1} - t_k$.  The patch we introduce below will require that
this map is symmetric and volume-preserving \cite{LeRe2004,
  HaLuWa2006}.  An explicit integrator for \eqref{SDE} is then given
by: \begin{equation} \label{Splitting} \hat{\phi}_{t_{k+1},t_k} =
  \psi_{t_{k+1},t_k} \circ \hat{\theta}_h \;.
\end{equation}
The order in \eqref{Splitting} does not matter since the integrator's
single step accuracy is $\mathcal{O}(h^2)$ either way.  Higher-order
accurate or implicit schemes can also be patched, but such integrators
may require more computational effort per step.  To ensure scalability
the map $\hat{\theta}_h$ in \eqref{Splitting} will use Verlet to
separately update sets of particles of the molecular system.

To this end we partition the molecular system into $m$ sets of
particles so that each set has $\nu_j$ degrees of freedom and
$\sum_{j=1}^m \nu_j = \nu$.  Given a time stepsize $h$ and initial
condition $\boldsymbol{x} \in \Omega$, a single step of
\eqref{Splitting} is defined as:
\begin{equation} \label{ExplicitIntegrator}
\hat{\boldsymbol{X}}_{1} = 
\psi_{t_1,t_0} \circ 
\underset{\hat{\theta}_h}{\underbrace{ \hat{\theta}_{h,m} \circ \cdots \circ \hat{\theta}_{h,1} }}( \boldsymbol{x} ) \;
\end{equation}
This update gives a numerical approximation to
  $\boldsymbol{Y}(h)$, $ \hat{\boldsymbol{X}}_{1} \approx
  \boldsymbol{Y}(h)$. The map $\hat{\theta}_{h,j}$ is defined as a
Verlet update of the position and momentum of the jth set of
molecules fixing the other sets.  More precisely given an input \[
\boldsymbol{x}_0 = ((\boldsymbol{q}_1, \cdots, \boldsymbol{q}_m), ( \boldsymbol{p}_1, \cdots, \boldsymbol{p}_m)) \;,
\]
where $(\boldsymbol{q}_j, \boldsymbol{p}_j) \in \mathbb{T}^{\nu_j} \times \mathbb{R}^{\nu_j}$, 
the map $\hat{\theta}_{h,j}$ outputs \[
\hat{\theta}_{h,j}(\boldsymbol{x}_0) = 
((\boldsymbol{q}_1, \cdots, \boldsymbol{q}_{j-1},\hat{\boldsymbol{q}}_{j}, \boldsymbol{q}_{j+1} , \cdots, \boldsymbol{q}_m), 
( \boldsymbol{p}_1, \cdots, \boldsymbol{p}_{j-1}, \hat{\boldsymbol{p}}_{j}, \boldsymbol{p}_{j+1}, \cdots, \boldsymbol{p}_m)) \;.
\]  Here: \begin{equation}
\begin{cases}
\hat{\boldsymbol{p}}_{j, \frac{1}{2}} &= \boldsymbol{p}_j 
- \frac{h}{2} \nabla_{Q_j} U(\boldsymbol{q}_1, \cdots, \boldsymbol{q}_m) \;, \\
\hat{\boldsymbol{q}}_{j} &= \boldsymbol{q}_j + h \boldsymbol{M}_j^{-1} \hat{\boldsymbol{p}}_{j, \frac{1}{2}} \;, \\
\hat{\boldsymbol{p}}_{j} &= \hat{\boldsymbol{p}}_{j, \frac{1}{2}}
- \frac{h}{2} \nabla_{Q_j} U(\boldsymbol{q}_1, \cdots, \boldsymbol{q}_{j-1}, 
	\hat{\boldsymbol{q}}_j, \boldsymbol{q}_{j+1}, \cdots, \boldsymbol{q}_m) \;,
	\end{cases}
\end{equation}
where $\boldsymbol{M}_j$ is the subset of the global mass matrix $\boldsymbol{M}$ 
associated to the jth set of particles.

The map $\hat{\theta}_{h,j}$ is symmetric and volume-preserving on
$\Omega$ since it is a Verlet update with respect to the Hamiltonian
\eqref{Hamiltonian} restricted to: \begin{align*} 
& \{ \boldsymbol{q}_1 \} \times \cdots \times \{ \boldsymbol{q}_{j-1} \} 
\times ( \mathbb{T}^{\nu_j} ) \times \{ \boldsymbol{q}_{j+1} \} \times \cdots \times \{ \boldsymbol{q}_{m} \} 
\times  \\
& \{ \boldsymbol{p}_1 \} \times \cdots \times \{ \boldsymbol{p}_{j-1} \} 
\times ( \mathbb{R}^{\nu_j} ) \times \{ \boldsymbol{p}_{j+1} \} \times \cdots \times \{ \boldsymbol{p}_{m} \} 
\subset \Omega \;.
\end{align*}  Even though the composite map $\hat{\theta}_h$ is a first-order splitting 
of \eqref{HamiltonianPart}, the one-step error of $\hat{\theta}_{h, j}$ in preserving energy is $O(h^3)$
because each of the separate Verlet updates is second-order accurate.

%%%%%%%%%%%%%%%%%%%%%%%%%%%%%%%%%%%%%%%%%%%%%%%%%

\subsection{Metropolized Integrator}

To derive an integrator that satisfies property (P1), we patch the
explicit integrator \eqref{ExplicitIntegrator} with Metropolis accept
or reject steps as follows.  Given the time-stepsize $h$,
the initial condition $\boldsymbol{x} \in \Omega$, and
the uniform random numbers $\zeta_j \sim U(0,1)$ for
$1 \le j \le m$, one step of a Metropolized Verlet integrator determines
$\boldsymbol{X}_1 \approx \boldsymbol{Y}(h)$ using:
\begin{equation} \label{mvi}
\boldsymbol{X}_{1} = 
\psi_{t_1,t_0} \circ \underset{\theta_h}{\underbrace{ \theta_{h,m} \circ \cdots \circ \theta_{h,1} }} ( \boldsymbol{x} ) \;.
\end{equation}
Here we have introduced the stochastic maps $\theta_{h,j}: \Omega \to \Omega$ 
which are Metropolized versions of the deterministic updates $\hat{\theta}_{h,j}$.
Given input \[
\boldsymbol{x}_0 = ((\boldsymbol{q}_1, \cdots, \boldsymbol{q}_m), ( \boldsymbol{p}_1, \cdots, \boldsymbol{p}_m)) \;,
 \]
the map $\theta_{h,j}$ computes a proposed move  \[
\boldsymbol{x}^{\star}_1  =
((\boldsymbol{q}_1, \cdots, \boldsymbol{q}_{j-1},\boldsymbol{q}^{\star}_{j}, \boldsymbol{q}_{j+1} , \cdots, \boldsymbol{q}_m), 
( \boldsymbol{p}_1, \cdots, \boldsymbol{p}_{j-1}, \boldsymbol{p}^{\star}_{j}, \boldsymbol{p}_{j+1}, \cdots, \boldsymbol{p}_m)) \;.
\]
using a Verlet update
\begin{equation}
\begin{cases}
\boldsymbol{p}_{j, \frac{1}{2}}^{\star} &= \boldsymbol{p}_j 
- \frac{h}{2} \nabla_{Q_j} U(\boldsymbol{q}_1, \cdots, \boldsymbol{q}_m) \;, \\
\boldsymbol{q}_{j}^{\star} &= \boldsymbol{q}_j + h \boldsymbol{M}_j^{-1} \boldsymbol{p}_{j, \frac{1}{2}}^{\star}  \;, \\
\boldsymbol{p}_{j}^{\star} &= \boldsymbol{p}_{j, \frac{1}{2}}^{\star} 
- \frac{h}{2} \nabla_{Q_j} U(\boldsymbol{q}_1, \cdots, \boldsymbol{q}_{j-1}, 
	\boldsymbol{q}_{j}^{\star} , \boldsymbol{q}_{j+1}, \cdots, \boldsymbol{q}_m) \;,
	\end{cases}
\end{equation}
and accepts this proposed move with probability \begin{equation} \label{acceptanceprobability}
1 \wedge \exp(- \beta [ H(\boldsymbol{x}^{\star}_1 ) - H( \boldsymbol{x}_0 ) ] ) \;.
\end{equation}
If this proposal is rejected the momentum is reversed.
To summarize \[
\theta_{h,j}: \boldsymbol{x}_0 \mapsto \boldsymbol{x}_1  \;,
\]
with output $\boldsymbol{x}_1$ defined as \[
\boldsymbol{x}_1 = 
((\boldsymbol{q}_1, \cdots, \boldsymbol{q}_{j-1},\bar{\boldsymbol{q}}_{j}, \boldsymbol{q}_{j+1} , \cdots, \boldsymbol{q}_m), 
( \boldsymbol{p}_1, \cdots, \boldsymbol{p}_{j-1}, \bar{\boldsymbol{p}}_{j}, \boldsymbol{p}_{j+1}, \cdots, \boldsymbol{p}_m)) \;.
\]
where
\begin{equation} \label{ithupdate}
 (\bar{\boldsymbol{q}}_{j}, \bar{\boldsymbol{p}}_{j}), = 
\begin{cases}  (\boldsymbol{q}_{j}^{\star}, \boldsymbol{p}_{j}^{\star}) & ~~\text{if}~~ \zeta_j  < 1 \wedge 
\exp(- \beta [ H(\boldsymbol{x}^{\star} ) - H( \boldsymbol{x} ) ] ) \;, \\
 (\boldsymbol{q}_{j}, -\boldsymbol{p}_{j}) & ~~ \text{otherwise} \;.
\end{cases} 
\end{equation}

The patch we propose consists of an accept or reject step like
\eqref{ithupdate}.  It requires evaluating the total energy at the
current step and at the proposed moves.  These statistics are usually
computed alongside evaluations of the force field.  When a proposed
move is rejected, the momentum of the jth set of particles is
reversed.  While these rejections ensure the integrator is
unconditionally stable and preserves the SDE's equilibrium
distribution, they cause an $O(1)$ error in accuracy due to momentum
reversals.  Next we consider the effect of these rejections and
momentum reversals on the approximation to the dynamics of the SDE.

%%%%%%%%%%%%%%%%%%%%%%%%%%%%%%%%%%%%%%%%%%%%%%%%%

\subsection{Quantitative Error Estimates}  \label{sec:analysis}

Consider once more the Hamiltonian of the molecular system: \begin{equation} \label{hamiltonian}
H(\boldsymbol{q}, \boldsymbol{p}) =   \frac{1}{2} \boldsymbol{p}^T \boldsymbol{M}^{-1} \boldsymbol{p} + U(\boldsymbol{q}) 
\end{equation}
where $\boldsymbol{q} \in \mathbb{T}^{\nu}$ and $\boldsymbol{p} \in
\mathbb{R}^{\nu}$ represent a configuration and momentum of the
molecular system, respectively.  For simplicity, we will assume the
thermostat dynamics is given by Langevin \cite{ScSt1978, BrBrKa1984}:
\begin{equation} \label{langevin}
\begin{cases}
\frac{d \boldsymbol{Q}}{dt} &= \boldsymbol{M}^{-1} \boldsymbol{P} \;, \\
d \boldsymbol{P} &= - \nabla U( \boldsymbol{Q} ) dt  
	- \gamma \boldsymbol{M}^{-1} \boldsymbol{P} dt + \sqrt{2 \gamma \beta^{-1}} d \boldsymbol{W} \;,
\end{cases}
\end{equation}
where $\gamma$ is a thermostat parameter, $\beta$ is the `inverse
temperature' entering the distribution~\eqref{EquilibriumDistribution}, and $\boldsymbol{W}$
is a $\nu$-dimensional Wiener process.  The theory below will rely on
the following regularity of the potential energy.

\begin{assumption} \label{pe}
The potential energy $U: \mathbb{T}^{\nu} \to \mathbb{R}$ is smooth.  
\end{assumption}

\begin{remark}
This assumption does not permit the potential force to have singularities.   For example, it holds for 
Morse potential interactions, but not Lennard-Jones interactions.  One can relax this requirement
by replacing smoothness of $U$ by some coercivity.  
\end{remark}

Let $\E^{\mu}$ denote expectation conditioned on the initial distribution 
being the equilibrium distribution of the SDE \eqref{langevin}:
\[
 \mathbb{E}^{\mu} \left( g( \boldsymbol{X}_k ) \right) = 
\int_{\Omega} 
\mathbb{E}^{\boldsymbol{x}} \left( g( \boldsymbol{X}_k ) \right) \, \mu(d \boldsymbol{x}),~~~
\boldsymbol{X}_0 = \boldsymbol{x} \in \Omega \;.
\]
The following theorem states that the Metropolized integrator satisfies property (P2).

%%%%%%%%%%%%%%%%%%%%%%%%%%%%%%%%%%%%%%%%%%%%%%%%%

\begin{theorem}
Assume \ref{pe}.
For every $T>0$, there exist positive constants $h_c$ and $C(T)$ 
such that,
\[
\left( \E^{\mu}  \left\{ \left|
      \boldsymbol{X}_{\lfloor t/h \rfloor}- \boldsymbol{Y}(h \lfloor t/h \rfloor)
    \right|^2 \right\} \right)^{1/2}  \le C(T) h \;,
\]
for all $h<h_c$ and $t \in [0, T]$.  
\label{MAGLAaccuracy}
\end{theorem}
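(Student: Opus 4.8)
The plan is to estimate the one-step discrepancy between the Metropolized integrator $\boldsymbol{X}_1$ and the true solution $\boldsymbol{Y}(h)$, decompose it into a "no-rejection" contribution and a "rejection" contribution, show the rejection probability is $O(h^3)$ in a strong enough sense at equilibrium, and then propagate the local estimate to the interval $[0,T]$ by a discrete Gronwall argument using stability of the scheme and the reversibility/stationarity of $\mu$.

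First I would set up the local error decomposition. Write $\boldsymbol{X}_1 = \psi_{t_1,t_0}\circ\theta_h(\boldsymbol x)$ and compare with $\hat{\boldsymbol X}_1 = \psi_{t_1,t_0}\circ\hat\theta_h(\boldsymbol x)$, the unpatched explicit integrator, and with $\boldsymbol Y(h)$. On the event $R$ that \emph{no} rejection occurs among the $m$ sub-updates, $\theta_h(\boldsymbol x)=\hat\theta_h(\boldsymbol x)$, so $\boldsymbol X_1 = \hat{\boldsymbol X}_1$ there; since the unpatched splitting integrator has $O(h^2)$ strong one-step error against $\boldsymbol Y(h)$ (standard for a composition of Verlet on \eqref{HamiltonianPart} with the exact OU flow \eqref{ou} on \eqref{ThermostatPart}, using Assumption~\ref{pe}), this piece contributes as desired. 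On the complementary event $R^c$ (at least one rejection), $|\boldsymbol X_1 - \boldsymbol Y(h)|$ is only $O(1)$ pointwise — but here the key input is that, under Assumption~\ref{pe}, the energy increment $H(\boldsymbol x_1^\star)-H(\boldsymbol x_0)$ of each sub-Verlet move is $O(h^3)$, uniformly on compacta, so the per-step rejection probability is $O(h^3)$; combined with a moment bound on the $O(1)$ size of the defect, the contribution of $R^c$ to the \emph{mean-square} one-step error is $o(h^2)$, in fact $O(h^3)$. I would phrase this as a lemma: $\E^{\mu}|\boldsymbol X_1 - \boldsymbol Y(h)|^2 \le C h^{4}$ (local weak-type in the right norm) together with the crude almost-sure bound $\E^{\mu}|\boldsymbol X_1 - \boldsymbol Y(h)|^2 \le C h^{2}$.

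Second I would globalize. Let $\boldsymbol e_k = \boldsymbol X_k - \boldsymbol Y(t_k)$ with $t_k = hk$. Using the Markov/flow property, write $\boldsymbol e_{k+1} = \big(\Phi_h(\boldsymbol X_k) - \Phi_h(\boldsymbol Y(t_k))\big) + \big(\Phi_h(\boldsymbol Y(t_k)) - \boldsymbol Y(t_{k+1})\big)$, where $\Phi_h$ is one Metropolized step with the \emph{same} noise realization as the exact solver on $[t_k,t_{k+1}]$ (a coupling of the driving Wiener increments). The second bracket is the local error analyzed above. For the first bracket I need a one-step mean-square stability estimate of the form $\E|\Phi_h(\boldsymbol u)-\Phi_h(\boldsymbol v)|^2 \le (1+Ch)|\boldsymbol u-\boldsymbol v|^2$; here is where unconditional stability of the patched scheme is essential — the Metropolis rejections prevent the blow-up that an explicit Verlet step would suffer from the stiff/large-energy regions, and the OU step $\psi$ is a strict contraction in momentum. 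Squaring, taking expectations, using $2\langle a,b\rangle \le \epsilon|a|^2 + \epsilon^{-1}|b|^2$, and invoking the stationarity of $\mu$ so that the local-error term is controlled by its $\E^{\mu}$-average at each step, I get $\E^{\mu}|\boldsymbol e_{k+1}|^2 \le (1+Ch)\,\E^{\mu}|\boldsymbol e_k|^2 + C h^{3}$. Discrete Gronwall over $k \le T/h$ steps then yields $\E^{\mu}|\boldsymbol e_k|^2 \le e^{CT}\,(T/h)\,Ch^{3} = C(T) h^{2}$, which is the claimed bound after taking square roots, with $h_c$ chosen so the local estimates and the stability constant are uniform.

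The main obstacle is the rejection/momentum-reversal term: one must show rigorously that although a rejection causes an $O(1)$ jump, rejections are rare enough ($O(h^3)$ probability per sub-step) and the accumulated defect over $[0,T]$ still vanishes like $h$. This requires (i) the uniform $O(h^3)$ energy-error bound for each restricted Verlet update — which needs Assumption~\ref{pe} plus control of $U$ and its derivatives along the numerical trajectory, hence an a priori bound on the momenta that in turn relies on the unconditional stability afforded by the patch; and (ii) interchanging the rejection-probability smallness with the expectation, i.e. bounding $\E^{\mu}\big[\mathbf 1_{R^c}|\boldsymbol X_1-\boldsymbol Y(h)|^2\big]$ by Cauchy–Schwarz against a uniform-in-$h$ fourth-moment bound on $|\boldsymbol X_1-\boldsymbol Y(h)|$. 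Establishing those moment bounds uniformly in $h$, together with the equilibrium-conditioned accounting that keeps the local errors from compounding, is the technical heart of the argument; everything else is the routine Gronwall machinery sketched above.
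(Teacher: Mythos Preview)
Your outline matches the strategy the paper sketches (single-step accuracy plus moment bounds, the latter supplied by property~(P1), then a boost to a global estimate; the paper defers the details to~\cite{BoVa2010A}). The local-error picture---the $O(h^3)$ rejection probability coming from the $O(h^3)$ energy error of each Verlet sub-step, the $O(1)$ momentum-flip defect paired with that smallness via Cauchy--Schwarz, and the accuracy of the unpatched splitting on the no-rejection event---is the right one.

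The step that does not go through as written is the one-step stability bound $\E|\Phi_h(\boldsymbol u)-\Phi_h(\boldsymbol v)|^2 \le (1+Ch)|\boldsymbol u-\boldsymbol v|^2$ for the \emph{numerical} map. Unconditional stability of the Metropolized scheme is a boundedness statement, not a Lipschitz one: the accept/reject decision is a discontinuous function of the state, so with the same uniform $\zeta_j$ an arbitrarily small perturbation of the input can switch a sub-step from accept to reject and produce an $O(1)$ jump (a full momentum reversal) in the output. The cure---and this is the route implicit in the paper's sketch and standard in the Milstein--Tretyakov framework---is to reverse the roles of the two maps in your telescoping: write $\boldsymbol e_{k+1}=\bigl(\boldsymbol Y^{\boldsymbol X_k}(h)-\boldsymbol Y^{\boldsymbol Y(t_k)}(h)\bigr)+\bigl(\boldsymbol X_{k+1}-\boldsymbol Y^{\boldsymbol X_k}(h)\bigr)$, where $\boldsymbol Y^{\boldsymbol x}(h)$ is the exact time-$h$ Langevin flow from $\boldsymbol x$ driven by the same Brownian increments. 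Under Assumption~\ref{pe} the \emph{true} flow is mean-square Lipschitz with constant $1+Ch$, which handles the first bracket; the second is the local error, now evaluated at $\boldsymbol X_k\sim\mu$ (this is exactly where property~(P1) does the work), so your $\E^{\mu}$-lemma applies uniformly in~$k$. Two smaller points: the correct local mean-square bound is $\E^{\mu}|\boldsymbol X_1-\boldsymbol Y(h)|^2\le Ch^{3}$ (the rejection event dominates), not $Ch^{4}$; and to land on the $+Ch^{3}$ increment in the recursion you must treat the cross term via the conditional \emph{mean} local error $|\E[\boldsymbol X_1-\boldsymbol Y(h)\mid\mathcal F_0]|\le Ch^{2}$ rather than the pointwise Young inequality, which by itself only yields a $+Ch^{2}$ increment and hence global order~$h^{1/2}$.
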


%%%%%%%%%%%%%%%%%%%%%%%%%%%%%%%%%%%%%%%%%%%%%%%%%

A proof of this result relies on single-step accuracy and bounds on moments of the Metropolized integrator  
(see \cite{BoVa2010A}).  These bounds help to boost the single-step error estimate 
to a global error estimate, and hence, pathwise convergence on finite time-intervals.  
The Metropolized integrator initiated from equilibrium satisfies such bounds as a 
consequence of property (P1).

Theorem~\ref{MAGLAaccuracy} does not require that the Metropolized
integrator be ergodic, but only that it preserves the
equilibrium distribution.  For this reason one can extend this result
to Metropolis-adjusted discretizations of other SDE-based thermostats.
Ergodicity is technically difficult to establish when the thermostat
force is a nonlinear function of the state or highly degenerate (see,
e.g., \cite{LeNoTh2009}).  However, for Langevin dynamics
\eqref{langevin} with linear friction and additive noise on all
momenta, ergodicity is straightforward to establish for the
Metropolized integrator.

\begin{theorem}[Ergodicity] 
  Assume \ref{pe}.  For all $h>0$, observables $g: \Omega \to \mathbb{R}$, 
  and initial conditions $\boldsymbol{x} \in \Omega$,
\[
\lim_{T \to \infty}  \frac{1}{T} \int_0^T g( \boldsymbol{X}_{\lfloor t/h \rfloor}  )   dt
= 
\int_{\Omega} g(\boldsymbol{x}) \mu( d \boldsymbol{x} )  \text{.}
\]
where $\boldsymbol{X}_0 = \boldsymbol{x}$.
\label{MAGLAergodicity}
\end{theorem}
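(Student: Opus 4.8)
The plan is to establish that the Metropolized integrator defines a time-homogeneous Markov chain on $\Omega$ that is (i) $\mu$-invariant and (ii) irreducible in a suitable sense, and then to invoke a standard ergodic theorem for Harris-recurrent Markov chains to pass from invariance to the pathwise (almost-sure) Birkhoff-type limit in the statement. First I would record that one step of the chain, $\boldsymbol{x} \mapsto \boldsymbol{X}_1$, is the composition $\psi_{t_1,t_0}\circ\theta_{h,m}\circ\cdots\circ\theta_{h,1}$; since $\psi$ is the exact Ornstein--Uhlenbeck flow~\eqref{ou} and each $\theta_{h,j}$ is a Metropolis step, the transition kernel $P_h$ is independent of $k$, so we have a genuine Markov chain. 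Property (P1), already asserted for the integrator, gives $\mu P_h = \mu$; for completeness I would note why: each $\hat\theta_{h,j}$ is symmetric and volume-preserving on the affine slice in which it acts, so the Metropolis rule with the acceptance probability~\eqref{acceptanceprobability} and momentum-flip-on-rejection makes $\theta_{h,j}$ reversible with respect to $\mu$ (the momentum reversal is needed precisely because the rejected state is $(\boldsymbol{q}_j,-\boldsymbol{p}_j)$, and $\mu$ is invariant under $\boldsymbol{p}\mapsto-\boldsymbol{p}$); and the OU map $\psi$ preserves $\mu$ exactly. Hence $\mu$ is invariant for the full step, and in fact $\mu$ is the unique invariant measure once irreducibility is in hand.

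Next I would prove a minorization / smoothness property of $P_h$ that yields irreducibility and aperiodicity. The key observation is that the Gaussian increment $\boldsymbol{\eta}_{t_1,t_0}$ in~\eqref{ou} has a strictly positive density on all of $\mathbb{R}^\nu$ (its covariance is nondegenerate because $\gamma>0$ and noise acts on every momentum component), so after the deterministic-then-OU composition the law of $\boldsymbol{X}_1$ given $\boldsymbol{x}$ has a density component that is bounded below, uniformly on compact sets of initial data, by a positive multiple of a fixed probability measure. Concretely: with probability bounded below (the product over $j$ of the acceptance probabilities, which is strictly positive since $U$ is smooth hence $H$ is continuous and the energy change over one Verlet substep is finite) the chain takes the ``all-accept'' branch, landing at $\hat\theta_h(\boldsymbol{x})$ before the OU kick; the OU kick then smears this to a full-support Gaussian in the momenta, and since positions are unchanged by $\psi$ one still needs the position marginal to be controllable — but $\hat\theta_h$ moves positions by $h\boldsymbol{M}^{-1}$ times a momentum that we can steer via earlier accepted moves, and more simply, iterating the step a bounded number of times one reaches any open set. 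This gives $\varphi$-irreducibility with $\varphi=\mu$ (or Lebesgue), and combined with the strong Feller–type continuity of $P_h$ (continuity of $U$ and the Gaussian smoothing) it gives that every compact set is ``small'' (petite). A Lyapunov function such as $V(\boldsymbol{q},\boldsymbol{p}) = H(\boldsymbol{q},\boldsymbol{p})+c$, using the contraction $e^{-\gamma\boldsymbol{M}^{-1}h}$ in the OU momenta together with the fact that Metropolis steps never increase $\mathbb{E}\,e^{\beta H}$ beyond the proposal's (and rejections only help), yields a geometric drift condition $P_h V \le \lambda V + b\,\mathbf{1}_C$ off a compact set $C$. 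Harris's theorem then gives positive Harris recurrence with invariant measure $\mu$, and the Markov-chain strong law of large numbers (Birkhoff ergodic theorem for Harris chains) applied to the $\mu$-integrable observable $g$ gives
\[
\lim_{N\to\infty}\frac1N\sum_{k=0}^{N-1} g(\boldsymbol{X}_k) = \int_\Omega g\,d\mu \qquad \text{a.s., for every } \boldsymbol{x}.
\]
Finally I would convert this discrete-time average into the continuous-time average in the statement: since $\boldsymbol{X}_{\lfloor t/h\rfloor}$ is piecewise constant in $t$ on intervals of length $h$, $\frac1T\int_0^T g(\boldsymbol{X}_{\lfloor t/h\rfloor})\,dt = \frac{h}{T}\sum_{k=0}^{\lfloor T/h\rfloor-1} g(\boldsymbol{X}_k) + O(h\,g(\boldsymbol{X}_{\lfloor T/h\rfloor})/T)$, and letting $T\to\infty$ with $h$ fixed the right side tends to $\int_\Omega g\,d\mu$ by the discrete SLLN (the boundary term vanishes a.s.\ since $g(\boldsymbol{X}_N)/N\to 0$, which follows from the drift bound).

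The main obstacle I expect is the irreducibility/minorization step — in particular verifying that the ``all-accept'' branch, followed by the OU momentum kick, actually reaches a neighborhood of an arbitrary target point, and doing so with a lower bound that is locally uniform in the starting point. The momentum directions are easy because the OU noise is full-rank, but controlling the position marginal requires either one extra step of the chain or a short argument that $\hat\theta_h$ composed with a full-support momentum kick has a joint density bounded below; smoothness of $U$ (Assumption~\ref{pe}) is what makes the Verlet map and hence this density continuous and the acceptance probabilities bounded away from zero on compacts, which is exactly where the hypothesis is used. Everything else — invariance via reversibility, the Lyapunov drift from the OU contraction, and the discrete-to-continuous-time reduction — is routine.
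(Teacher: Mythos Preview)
Your proposal is essentially correct and follows the same overall strategy as the paper: establish $\mu$-invariance (property (P1)) via reversibility of each Metropolized substep and exactness of the Ornstein--Uhlenbeck map, establish $\varphi$-irreducibility by exploiting the full-support Gaussian kick in the momenta together with the positive probability of the all-accept branch, and then invoke Meyn--Tweedie theory to conclude the strong law of large numbers. The paper's own proof is much terser: it restricts to the trivial partition, observes that the unpatched explicit integrator has a smooth transition density \emph{when sampled every other step} (one step moves only the momenta stochastically; a second step pushes that randomness into the positions via Verlet), notes that the accept branch occurs with positive probability, and concludes that the smooth part of the two-step kernel of the Metropolized integrator is supported everywhere. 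Irreducibility together with (P1) then gives ergodicity directly by citing \cite{MenTw1996}.

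The main substantive difference is that you insert a Lyapunov/drift argument with $V = H + c$ to obtain geometric ergodicity before invoking Harris's theorem. This is unnecessary here: once you know the chain is $\varphi$-irreducible and has an invariant \emph{probability} measure $\mu$, positive Harris recurrence and the pathwise ergodic theorem follow without any drift condition, which is exactly the shortcut the paper takes. Your Lyapunov sketch is also the shakiest part of the proposal --- the assertion that ``Metropolis steps never increase $\mathbb{E}\,e^{\beta H}$ beyond the proposal's'' is not the right formulation, and turning the combination of Verlet-proposal plus accept/reject into a clean one-step drift inequality $P_h V \le \lambda V + b\mathbf{1}_C$ is more delicate than you indicate (rejections flip momentum but do not decrease $H$, and accepted Verlet moves control $H$ only to $O(h^3)$). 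I would simply drop the Lyapunov step; the rest of your argument, including the discrete-to-continuous time reduction at the end, is fine and matches what the paper implicitly uses.
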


\begin{proof}
  This proof is terse.  We prove this statement for the
  Metropolized integrator based on a trivial partition of the molecular system. 
  For more details please see \cite{BoVa2010A} and references therein.  
  The Metropolized integrator by construction satisfies property (P1).  Moreover, its acceptance
  probability is strictly less than one everywhere, and the integrator
  without accept or reject steps admits a smooth transition density
  when sampled every other step.  These two observations imply that
  the smooth part of the two-step transition probability of the
  Metropolized integrator is supported everywhere, and hence, the
  chain is irreducible.  Irreducibility and property (P1) together
  imply ergodicity \cite{MenTw1996}.
\end{proof}

Theorems~\ref{MAGLAaccuracy} and \ref{MAGLAergodicity} are sufficient
to establish that the Metropolized integrator can estimate dynamics
along equilibrium trajectories of \eqref{langevin}.  For example, as a
corollary to the above theorems one can prove that the Metropolized
integrator can be used to compute equilibrium correlation
functions.

\begin{corollary}
Assume \ref{pe}.
For every $T>0$ there exist positive constants $h_c$ and $C(T)$ such that 
  \[
| A^{h}(\tau) - A(h  \lfloor \tau/h \rfloor )  | \le C(T) h \;,
\]
for all $h<h_c$ and $\tau \in [0, T]$.
\label{MAGLAcorrelationaccuracy}
\end{corollary}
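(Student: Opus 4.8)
The plan is to combine the pathwise accuracy estimate of Theorem~\ref{MAGLAaccuracy} with the ergodicity statement of Theorem~\ref{MAGLAergodicity}, using the former to control the bias introduced by discretization and the latter to justify replacing the infinite-sample average defining $A^h(\tau)$ by an equilibrium expectation. First I would rewrite the two objects being compared in a common form: by ergodicity (Theorem~\ref{MAGLAergodicity}), applied to the two-point observable $g(\boldsymbol{x},\boldsymbol{x}')=(\boldsymbol{p}')^T\boldsymbol{p}$ along the Metropolized chain started from $\mu$, the sample average over $N\to\infty$ converges to the stationary correlation
\[
A^h(\tau) = \E^{\mu}\!\left( \boldsymbol{X}_{\lfloor (\tau+t)/h\rfloor}^{P,T}\, \boldsymbol{X}_{\lfloor t/h\rfloor}^{P}\right),
\]
which, because $\mu$ is invariant, is independent of $t$ and may be evaluated at $t=0$; here $\boldsymbol{X}^{P}$ denotes the momentum block of $\boldsymbol{X}$. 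On the other side, $A(h\lfloor\tau/h\rfloor)$ is by definition the corresponding continuous correlation $\E^{\mu}(\boldsymbol{Y}(h\lfloor\tau/h\rfloor)^{P,T}\boldsymbol{Y}(0)^{P})$ with $\boldsymbol{Y}$ the true solution of \eqref{langevin} started from $\mu$. Writing $n=\lfloor\tau/h\rfloor$, the quantity to bound is therefore $|\E^{\mu}(\boldsymbol{X}_n^{P,T}\boldsymbol{X}_0^{P}) - \E^{\mu}(\boldsymbol{Y}(hn)^{P,T}\boldsymbol{Y}(0)^{P})|$.

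Next I would couple the discrete and continuous processes by starting both from the same random initial condition $\boldsymbol{x}\sim\mu$ and driving them with the same Wiener increments, so that $\boldsymbol{X}_0=\boldsymbol{Y}(0)=\boldsymbol{x}$. Then I would add and subtract the cross term and use the elementary bilinear telescoping identity
\[
\boldsymbol{X}_n^{P,T}\boldsymbol{X}_0^{P} - \boldsymbol{Y}(hn)^{P,T}\boldsymbol{Y}(0)^{P} = \left(\boldsymbol{X}_n^{P}-\boldsymbol{Y}(hn)^{P}\right)^{T}\boldsymbol{X}_0^{P},
\]
since $\boldsymbol{X}_0^{P}=\boldsymbol{Y}(0)^{P}$. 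Taking expectations and applying Cauchy--Schwarz gives
\[
\left|A^h(\tau)-A(hn)\right| \le \left(\E^{\mu}\left|\boldsymbol{X}_n^{P}-\boldsymbol{Y}(hn)^{P}\right|^2\right)^{1/2}\left(\E^{\mu}\left|\boldsymbol{X}_0^{P}\right|^2\right)^{1/2}.
\]
The second factor is a Gaussian second moment under $\mu$, hence a finite constant depending only on $\boldsymbol{M}$ and $\beta$ (it equals $\nu/\beta$ up to the trace of $\boldsymbol{M}$). The first factor is controlled by Theorem~\ref{MAGLAaccuracy}: since $\boldsymbol{X}_n^{P}-\boldsymbol{Y}(hn)^{P}$ is a sub-vector of $\boldsymbol{X}_{\lfloor t/h\rfloor}-\boldsymbol{Y}(h\lfloor t/h\rfloor)$ with $t=hn=h\lfloor\tau/h\rfloor\le\tau\le T$, its mean-square is at most $C(T)^2h^2$ for all $h<h_c$. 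Combining the two factors yields $|A^h(\tau)-A(h\lfloor\tau/h\rfloor)|\le C'(T)h$ with $C'(T)=C(T)\sqrt{\nu\,\mathrm{tr}(\boldsymbol{M})/\beta}$ (or a similarly explicit constant), and monotonicity of $C'(T)$ in $T$ is inherited from that of $C(T)$, which completes the proof.

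The main obstacle, and the point requiring the most care, is the interchange-of-limits step that identifies the infinite-sample average $A^h(\tau)$ with the stationary expectation $\E^{\mu}(\boldsymbol{X}_n^{P,T}\boldsymbol{X}_0^{P})$: Theorem~\ref{MAGLAergodicity} as stated is a Birkhoff-type almost-sure convergence of time averages for a single chain, whereas $A^h(\tau)$ is defined through a double average over realizations and over the equilibrium initial condition. One must argue that averaging along one infinitely long trajectory reproduces the $\mu$-average of the two-point function — i.e., invoke ergodicity of the stationary chain rather than mere convergence of ergodic averages, and ensure the two-point observable $(\boldsymbol{p}')^T\boldsymbol{p}$ is $\mu$-integrable along the chain, which follows from the finite second moments guaranteed by property (P1). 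A secondary technical wrinkle is the floor-function bookkeeping: one checks that $\lfloor(\tau+t_k)/h\rfloor-\lfloor t_k/h\rfloor=\lfloor\tau/h\rfloor$ holds for all $t_k$ on the mesh (it does, since $t_k$ is an integer multiple of $h$), so that the lag appearing in the sample average is exactly $n=\lfloor\tau/h\rfloor$ and the reduction to a stationary correlation at a fixed lag is legitimate. Once these bookkeeping and ergodicity identifications are in place, the remainder is the short Cauchy--Schwarz argument above.
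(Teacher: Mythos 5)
Your proposal is correct and follows essentially the same route as the paper's proof in the Appendix: use ergodicity to identify $A^h(\tau)$ with a stationary two-point expectation, exploit that the discrete and continuous paths share the initial condition $\boldsymbol{x}\sim\mu$ so the difference of correlations reduces to a single factor $\boldsymbol{X}_{\lfloor\tau/h\rfloor}-\boldsymbol{Y}(h\lfloor\tau/h\rfloor)$ paired with the time-zero observable, then apply Cauchy--Schwarz and Theorem~\ref{MAGLAaccuracy}. The only cosmetic difference is that the paper phrases the argument for a general Lipschitz observable $g$ (with a Jensen step for the conditional expectation) while you exploit bilinearity of the momentum inner product directly; the substance is the same.
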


A proof of this result is provided in the Appendix.

%%%%%%%%%%%%%%%%%%%%%%%%%%%%%%%%%%%%%%%%%%%%%%%%%

\subsection{Case of Multiple-Time-Stepsizes}

Now we present an implementation of the patch to an explicit
integrator for \eqref{SDE} based on a multiple-time-stepsize
integrator known as RESPA \cite{TuBe1991, LeMaOrWe2003}.  RESPA was
proposed for molecular systems at constant energy in
\cite{TuBeMa1992}.  This integrator is a version of Verlet adapted to
molecular systems with multiple time scales.  It is designed to
overcome a time-stepsize restriction imposed by rapidly changing
short-range interactions.  The scheme evaluates short-range forces on
a smaller time-stepsize, and hence, more frequently than long-range
forces.  The overall accuracy of the algorithm is dictated by the
largest time-stepsize.  For Hamiltonian ODEs RESPA is known to exhibit
resonance instabilities as described in \cite{FoDaLe2008}.  The
resonance occurs between forces evaluated at the coarse time-stepsize
and the normal modes of the molecular system excluding long-range
interactions.  These numerical instabilities also appear in generalizations of
RESPA to Langevin SDEs \cite{IzCaWoSk2001, MaIzSk2003, Fo2009}.  
In the following we show how to patch such a generalization to tackle 
such instabilities.

Again we partition the molecular system into $m$ sets of particles so
that each set has $\nu_i$ degrees of freedom and $\sum_{i=1}^m \nu_i =
\nu$.  We assume that the potential energy can be decomposed into fast
and slow interactions: \[ U(\boldsymbol{q}) =
U_{\text{slow}}(\boldsymbol{q}) + U_{\text{fast}}(\boldsymbol{q}) \;.
\]  
We further assume fast and slow potential forces are respectively
evaluated at $h_f$ and $h$ time increments.  The small time-stepsize
is typically a fraction of the large time-stepsize.  The integrator
\eqref{mvi} will be used, except that $\theta_{h,j}$ is replaced with
a Metropolized version of RESPA that separately updates each element
of the partition.  Given a small time-stepsize $h_f$, large
time-stepsize $h$, and input \[ 
\boldsymbol{x}_0 = ((\boldsymbol{q}_1, \cdots, \boldsymbol{q}_m), ( \boldsymbol{p}_1, \cdots, \boldsymbol{p}_m)) \;.
 \]
Set $N_f = \lfloor h/h_f \rfloor$.  The map $\theta_{h,j}$ computes a proposed move  \[
\boldsymbol{x}^{\star}_1  =
((\boldsymbol{q}_1, \cdots, \boldsymbol{q}_{j-1},\boldsymbol{q}^{\star}_{j}, \boldsymbol{q}_{j+1} , \cdots, \boldsymbol{q}_m), 
( \boldsymbol{p}_1, \cdots, \boldsymbol{p}_{j-1}, \boldsymbol{p}^{\star}_{j}, \boldsymbol{p}_{j+1}, \cdots, \boldsymbol{p}_m)) \;,
\]
using a RESPA update
\begin{equation*}
\begin{cases}
& \boldsymbol{p}_{j, \frac{1}{N_f}} = \boldsymbol{p}_j - \frac{h}{2} \nabla_{Q_j} U_{\text{slow}}(\boldsymbol{q}_1, \cdots, \boldsymbol{q}_m) \;, \\
& \boldsymbol{q}_{j, \frac{1}{N_f}} = \boldsymbol{q}_j \\
& \begin{cases}
\boldsymbol{P}_{j, \frac{k+1}{N_f}} &= \boldsymbol{p}_{j, \frac{k}{N_f}}  - \frac{h_f}{2} \nabla_{Q_j} U_{\text{fast}}(\boldsymbol{q}_1, \cdots, \boldsymbol{q}_{j-1}, 
	\boldsymbol{q}_{j,\frac{k}{N_f}} , \boldsymbol{q}_{j+1}, \cdots, \boldsymbol{q}_m) \;, \\
\boldsymbol{q}_{j, \frac{k+1}{N_f}} &= \boldsymbol{q}_{j, \frac{k}{N_f}} + h_f \boldsymbol{M}_j^{-1} \boldsymbol{P}_{j, \frac{k+1}{N_f}} \;, \\
\boldsymbol{p}_{j, \frac{k+1}{N_f}} &= \boldsymbol{P}_{j, \frac{k+1}{N_f}} 
- \frac{h_f}{2} \nabla_{Q_j} U_{\text{fast}}(\boldsymbol{q}_1, \cdots, \boldsymbol{q}_{j-1}, 
	\boldsymbol{q}_{j, \frac{k+1}{N_f}} , \boldsymbol{q}_{j+1}, \cdots, \boldsymbol{q}_m) \;, \\
\end{cases}   \\
&      \boldsymbol{q}_{j}^{\star} = \boldsymbol{q}_{j, 1} \\
&	\boldsymbol{p}_{j}^{\star} = \boldsymbol{p}_{j, 1}
- \frac{h}{2} \nabla_{Q_j} U_{\text{slow}}(\boldsymbol{q}_1, \cdots, \boldsymbol{q}_{j-1}, 
	\boldsymbol{q}_{j}^{\star} , \boldsymbol{q}_{j+1}, \cdots, \boldsymbol{q}_m) \;,
	\end{cases}
\end{equation*}
where the inner index $k$ runs from $1,\cdots,N_f-1$ (where $N_f$ is the
number of steps taken at the small time-stepsize $h_f$) and the outer
index $j$ runs from $1,\cdots,m$ (where $m$ is the number of elements in
the partition).  This proposed move is accepted with
probability \begin{equation} \label{acceptanceprobabilityrespa} 1
  \wedge \exp(- \beta [ H(\boldsymbol{x}^{\star}_1 ) - H(
  \boldsymbol{x}_0 ) ] ) \;.
\end{equation}
If this proposal is rejected the momentum is reversed as before.  This
generalization of RESPA achieves a speedup when the slow potential
force is expensive to evaluate relative to the fast potential force.

%%%%%%%%%%%%%%%%%%%%%%%%%%%%%%%%%%%%%%%%%%%%%%%%%

\subsection{Case of Holonomic Constraints}

Here we show how to implement the patch to molecular systems with
holonomic constraints.   Similar issues are treated in 
\cite{Ha2008, LeRoSt2010} from the viewpoint of sampling in the 
presence of constraints.  Again we partition the molecular system with
mass matrix $\boldsymbol{M}$ into $m$ sets of particles so that each
set has $\nu_i$ degrees of freedom and $\sum_{i=1}^m \nu_i = \nu$.
The main difference to the previous cases is the possibility that the
dynamics along each element of this partition is not well-defined.
For instance, if the constraint couples all atoms in the
molecule, then only the trivial partition will lead to well-posed
dynamics.  Or, if the constraint couples pairs of molecules, then only
partitions in terms of these pairs are permissible.

To rule out this possibility, we assume that the jth set of the
partition has an associated scalar constraint function $g_i :
\mathbb{T}^{\nu} \to \mathbb{R}$ independent from the positions of the
other sets.  The case of vectorial constraints can be handled quite
similarly, but for clarity we will consider scalar constraints here.
The intersection of the zero level sets of these constraint functions
defines the constraint manifold $\Sigma = \cap_{i=1}^m g_i^{-1}(0)
\subset \Omega$.  The velocities of the constrained atoms are
tangent to this manifold.  These observations motivate introducing the
set of all constrained positions and momenta denoted by $T \Sigma$
which is known as the cotangent manifold: \[ T \Sigma = \{
(\boldsymbol{q}, \boldsymbol{p}) \in \Omega ~~|~~g_i(\boldsymbol{q}) =
0,~~ \nabla_{\boldsymbol{Q}_i} g_i(\boldsymbol{q})^T
\boldsymbol{M}_i^{-1} \boldsymbol{p}_i = 0,~~ i=1,\cdots,m \} \;,
\]
where $\boldsymbol{M}_i$ is the subset of the global mass matrix
$\boldsymbol{M}$ associated to the ith set of particles.

For a molecular system with constraints, the probability distribution  
\eqref{EquilibriumDistribution} is replaced by:
\begin{equation} \label{ConstrainedEquilibriumDistribution}
d \mu_{T \Sigma}(\boldsymbol{q}, \boldsymbol{p}) \eqdef 
Z^{-1} e^{-\beta H(\boldsymbol{q}, \boldsymbol{p}) } d \sigma_{T \Sigma} (\boldsymbol{q}, \boldsymbol{p}) \;, ~~~
Z = \int_{ T \Sigma } e^{-\beta H(\boldsymbol{q}, \boldsymbol{p}) }  d \sigma_{T \Sigma} (\boldsymbol{q}, \boldsymbol{p}) \;.
\end{equation}
Here the measure $\sigma_{T \Sigma}$ represents standard volume measure on the manifold $T \Sigma$.
Introduce the Lagrange multiplier $\lambda_i(t) \in \mathbb{R}$.
The stochastic dynamics of the constrained molecular system is assumed
to be of the form:
\begin{equation} \label{ConstrainedSDE}
\begin{cases}
\displaystyle\frac{d \boldsymbol{Q}_i}{dt} &= \boldsymbol{M}_i^{-1} \boldsymbol{P}_i \;, \\
d \boldsymbol{P}_i &= - \nabla_{\boldsymbol{Q}_i} U( \boldsymbol{Q} ) dt 
+ \nabla_{\boldsymbol{Q}_i}  g_i(\boldsymbol{Q}) d \lambda_i
+ d \boldsymbol{\eta}_i( \boldsymbol{Q}, \boldsymbol{P} )\;, \\
			g_i(\boldsymbol{Q}) &= 0 \;,
\end{cases}  
\end{equation}
where $i$ enumerates the elements in the partition.  To check that \eqref{ConstrainedEquilibriumDistribution} is an 
invariant measure of \eqref{ConstrainedSDE}, its suffices to show its density is a stationary solution of the corresponding
Fokker-Planck equation.  We will assume the solution to \eqref{ConstrainedSDE} is ergodic with respect to the probability distribution 
$\mu_{T \Sigma}$.  To eliminate the Lagrange multiplier appearing in \eqref{ConstrainedSDE}, `differentiate' the constraint 
function twice along a path of $(\boldsymbol{Q}, \boldsymbol{P})$ following either the heuristic approach in 
\cite{VaCi2006} or the rigorous approach described in \cite{CiLeVa2008, BoOw2009}.

We will now introduce a constrained version of \eqref{ExplicitIntegrator}.  It is 
obtained by splitting \eqref{ConstrainedSDE} into a constrained Hamiltonian system: 
\begin{equation} \label{ConstrainedHamiltonianPart}
\begin{cases}
\displaystyle\frac{d \boldsymbol{Q}_i}{dt} &= \boldsymbol{M}_i^{-1} \boldsymbol{P}_i  \;, \\[6pt]
\displaystyle\frac{d \boldsymbol{P}_i}{dt} &= - \nabla_{\boldsymbol{Q}_i} U( \boldsymbol{Q} ) + \nabla_{\boldsymbol{Q}_i}  g_i(\boldsymbol{Q}) \lambda_{i,1}  \;, \\
			g_i(\boldsymbol{Q}) &= 0 \;,
\end{cases}
\end{equation}
and constrained thermostat dynamics:
\begin{equation} \label{ConstrainedThermostatPart}
\begin{cases}
\frac{d \boldsymbol{Q}_i}{dt} &= 0 \;, \\
d \boldsymbol{P}_i &=  d \boldsymbol{\eta}_i + \nabla_{\boldsymbol{Q}_i}  g_i(\boldsymbol{Q}) d \lambda_{i,2} \;, \\
0&=\nabla_{\boldsymbol{Q}_i} g_i(\boldsymbol{Q})^T \boldsymbol{M}_i^{-1} \boldsymbol{P}_i \;.
\end{cases}
\end{equation}
We approximate the solution to \eqref{ConstrainedHamiltonianPart} by
using a constrained version of Verlet known as RATTLE
\cite{RyCiBe1977}.  As before to maintain scalability we will
separately propagate each set of the partition using RATTLE.  
Since RATTLE moves are time-reversible and volume-preserving
\cite{LeSk1994}, a Metropolis method based on a RATTLE proposed move
and the probability distribution
\eqref{ConstrainedEquilibriumDistribution} yields an acceptance
probability that is a function of the change in energy induced by the
separate RATTLE moves.  As before we compose this map with an
approximation to \eqref{ConstrainedThermostatPart} which we denote by
$\psi_{t_{k+1},t_k}: T \Sigma \to T \Sigma$.  The step-by-step
procedure to implement this algorithm is given below.

If we assume that the thermostat in \eqref{ConstrainedThermostatPart} 
is given by Langevin dynamics,  then its pathwise unique flow is given by: 
\begin{equation} \label{constrainedou}
\psi_{t_{k+1}, t_k} : \{ (\boldsymbol{q}_i, \boldsymbol{p}_i) \}_{i=1}^{m} \mapsto  
\{ (\boldsymbol{q}_i, e^{-\gamma \mathbb{P}_i(\boldsymbol{q}) \boldsymbol{M}_i^{-1} h}  \boldsymbol{p}_i + \boldsymbol{\eta}_{t_{k+1},t_k}^i ) ) \}_{i=1}^m \;,
\end{equation}
where we have introduced the random vector:  \[
\boldsymbol{\eta}_{t_{k+1},t_k}^i =  
\sqrt{2 \gamma \beta^{-1}}   \int_{t_k}^{t_{k+1}} 
e^{- \gamma \mathbb{P}_i(\boldsymbol{q}) \boldsymbol{M}_i^{-1} (t_{k+1} -s ) } \mathbb{P}_i(\boldsymbol{q}) d \boldsymbol{W}_i(s) \;,
\] and the projection matrix: \[
 \mathbb{P}_i(\boldsymbol{q}) = \boldsymbol{I} - 
( \nabla_{\boldsymbol{Q}_i} g_i ) [ ( \nabla_{\boldsymbol{Q}_i} g_i)^T \boldsymbol{M}_i^{-1} (\nabla_{\boldsymbol{Q}_i} g_i) ]^{-1} 
(\nabla_{\boldsymbol{Q}_i} g_i)^T \boldsymbol{M}_i^{-1} \;.
\]
Here $\boldsymbol{I}$ is the $\nu_i \times \nu_i$ identity matrix.  To derive
\eqref{constrainedou} eliminate the Lagrange multiplier in
\eqref{ConstrainedThermostatPart} by differentiating the momentum
constraint and using as an integrating factor the matrix $\exp(\gamma
\mathbb{P}_i(\boldsymbol{q}) \boldsymbol{M}_i^{-1} t )$.

\begin{remark}
When the mass matrix is not the identity, \eqref{constrainedou} requires computing
the exponential of a position dependent matrix.  This calculation is non-trivial,
and can be avoided by using SHAKE in place of RATTLE and an unconstrained
Ornstein-Uhlenbeck update in place of \eqref{constrainedou}.  The resulting Metropolized
integrator will satisfy property (P1) with respect to a constrained equilibrium distribution, 
but with unconstrained velocities.   It will also satisfy property (P2).
\end{remark}

Given a time-stepsize $h$, initial condition $\boldsymbol{x} \in \Omega$, and  uniform random 
numbers $\zeta_j \sim U(0,1)$ for $j=1,\cdots,m$, one step of the Metropolized integrator 
determines $\boldsymbol{X}_1$ using \eqref{mvi}, but with the proposed moves in
$\theta_{h,j}$  obtained by RATTLE as follows.  
Introduce the discrete Lagrange multipliers $\lambda_{j,1}, \lambda_{j,2} \in \mathbb{R}$.
The integrator inputs a point in phase space on the constraint manifold \[
\boldsymbol{x}_0 = ((\boldsymbol{q}_1, \cdots, \boldsymbol{q}_m), ( \boldsymbol{p}_1, \cdots, \boldsymbol{p}_m)) \;,
\] 
and outputs a proposed move on the constraint manifold \[
\boldsymbol{x}^{\star}_1  =
((\boldsymbol{q}_1, \cdots, \boldsymbol{q}_{j-1},\boldsymbol{q}^{\star}_{j}, \boldsymbol{q}_{j+1} , \cdots, \boldsymbol{q}_m), 
( \boldsymbol{p}_1, \cdots, \boldsymbol{p}_{j-1}, \boldsymbol{p}^{\star}_{j}, \boldsymbol{p}_{j+1}, \cdots, \boldsymbol{p}_m)) \;,
\]  where $(\boldsymbol{q}_{j}^{\star}, \boldsymbol{p}_{j}^{\star})$ are determined by
\begin{equation} \label{RATTLE}
\begin{cases}
\boldsymbol{p}_{j, \frac{1}{2}}^{\star} &= \boldsymbol{p}_j 
- \frac{h}{2} \nabla_{Q_j} U(\boldsymbol{q}_1, \cdots, \boldsymbol{q}_m) 
- \frac{h}{2} \lambda_{j,1} \nabla_{Q_j}  g_j(\boldsymbol{q}_1, \cdots, \boldsymbol{q}_m) \;,  \\
\boldsymbol{q}_{j}^{\star} &= \boldsymbol{q}_j +  h \boldsymbol{M}_j^{-1}  \boldsymbol{p}_{j, \frac{1}{2}}^{\star} \;, \\
\boldsymbol{p}_{j}^{\star} &= \boldsymbol{p}_{j, \frac{1}{2}}^{\star}  
- \frac{h}{2} \nabla_{Q_j} U(\boldsymbol{q}_1, \cdots, \boldsymbol{q}_{j-1}, 
	\boldsymbol{q}_j^{\star}, \boldsymbol{q}_{j+1}, \cdots, \boldsymbol{q}_m) \\
& \qquad - \frac{h}{2} \lambda_{j,2} \nabla_{Q_j}  g_j(\boldsymbol{q}_1, ..., \boldsymbol{q}_{j-1}, 
	\boldsymbol{q}_j^{\star}, \boldsymbol{q}_{j+1}, \cdots, \boldsymbol{q}_m)
	\;, \\ 
0 &= g_j(\boldsymbol{q}_1, \cdots, \boldsymbol{q}_{j-1}, \boldsymbol{q}_j^{\star}, \boldsymbol{q}_{j+1}, \cdots, 
\boldsymbol{q}_m)  \;, \\
0&=\nabla_{\boldsymbol{Q}_j} g_j(\boldsymbol{q}_1, \cdots, \boldsymbol{q}_{j-1}, 
	\boldsymbol{q}_j^{\star}, \boldsymbol{q}_{j+1}, \cdots, \boldsymbol{q}_m)^T \boldsymbol{M}_j^{-1} \boldsymbol{p}_j^{\star} \;.
\end{cases} 
\end{equation}
The Lagrange multiplier $\lambda_{j,1}$ enforces that 
the jth proposed positions satisfy the position constraint, and $\lambda_{j,2}$ enforces
that the jth proposed velocities are tangent to the constraint manifold.
This proposed move is accepted with probability \begin{equation} \label{AcceptRejectRATTLE}
1 \wedge \exp(- \beta [ H(\boldsymbol{x}^{\star}_1 ) - H( \boldsymbol{x}_0 ) ] ) \;.
\end{equation}
This acceptance probability is a function of the change in energy induced by the RATTLE proposed move.
If this proposal is rejected the velocity is reversed, but remains tangent to the constraint manifold.

%%%%%%%%%%%%%%%%%%%%%%%%%%%%%%%%%%%%%%%%%%%%%%%%%
%%%%%%%%%%%%%%%%%%%%%%%%%%%%%%%%%%%%%%%%%%%%%%%%%
%%%%%%%%%%%%%%%%%%%%%%%%%%%%%%%%%%%%%%%%%%%%%%%%%

\section{Applications} \label{sec:applications}

%%%%%%%%%%%%%%%%%%%%%%%%%%%%%%%%%%%%%%%%%%%%%%%%%

This section tests the Metropolized integrators introduced in \S\ref{sec:nutshell}.

\subsection{Lennard-Jones Fluid}

A Lennard-Jones fluid consists of $n$ identical particles with
pairwise interactions given by a Lennard-Jones potential energy.  In
what follows we use dimensionless units to describe this system.  In
these units mass is rescaled by the mass of an individual particle (so
that the particles have unit mass), energy by the depth of the
Lennard-Jones potential energy, and length by the point where the
potential energy is zero.  We follow the notation and setup provided
in Part I of \cite{FrSm2002}.

We simulate the Lennard-Jones fluid in a fixed periodic box which we
call the simulation box.  We used a truncated version of the
  Lennard-Jones potential in which the energy between two particles a
  distance $r$ apart is kept constant after a certain cutoff distance
  and is given by:
\begin{equation} \label{LJpair}
U_{LJ}(r) = \begin{cases} f(r) - f(r_c) & r < r_c \;, \\
0 & \text{otherwise}  \;,
\end{cases}
\end{equation}
where we have introduced $f(r) = 4 ( 1/r^{12} - 1/r^{6} )$ and $r_c$
is bounded above by the size of the simulation box.  Other shifts can
be used to make the higher derivatives of $U_{LJ}$
continuous.  The error introduced by the truncation in~\eqref{LJpair} is
proportional to the density of the molecular system and can be made
arbitrarily small by selecting the cutoff distance sufficiently large.

The pair potentials are a function of the distance between the ith and
jth particle. If the position of the ith and jth particles are
$\boldsymbol{q}_i$ and $\boldsymbol{q}_j$, and the length of the
simulation box $\ell$, then this distance is given by:
\[
r_{i,j}(\boldsymbol{q}) = | (\boldsymbol{q}_i  - \boldsymbol{q}_j)\bmod \ell | \;.
\]
In terms of this pairwise distance, the potential energy of a Lennard-Jones fluid is a sum of interactions 
between all pairs of particles:
\begin{equation} \label{LJ}
U(\boldsymbol{q}) =  \sum_{i=1}^{n-1} \sum_{j=i+1}^{n} U_{LJ}(r_{i,j}( \boldsymbol{q}) ) \;.
\end{equation}
Evaluating the potential force requires $O(n^2)$ operations (where
$\nu$ is the dimension of configuration space), and typically
dominates total computation cost.

%%%%%%%%%%%%%%%%%%%%%%%%%%%%%%%%%%%%%%%%%%%%%%%%%

\subsection{Autocorrelation of Lennard-Jones Fluid}

Here we test the accuracy of the Metropolized integrator \eqref{mvi}
based on trivial and per particle partitions of the molecular system.
In the former proposed moves in the Metropolis steps are obtained by
per particle Verlet updates, and in the latter by global Verlet
updates.  The numerics indicate that the order of accuracy of both
methods is roughly $O(h^{2})$ and that the error constant of the per
particle partition is approximately an order of magnitude smaller.

To test accuracy we use the Metropolized integrator to estimate the
equilibrium momentum autocorrelation of the Lennard-Jones fluid.  For
the numerical experiment, we set the fluid's density $\rho=0.8442$ and
temperature $\mathcal{T}=0.728$ (units are dimensionless as described
earlier).  For these values the phase of the Lennard-Jones fluid is
liquid, and close to the triple (gas-liquid-solid) point.  We also fix
the number of particles to be $n=25$ and the degrees of freedom to be
$d=2$.  The size of the simulation box is $\ell = (n/\rho)^{1/2}
\approx 5.04$.  A reasonable cutoff distance in \eqref{LJpair} at the
selected density is $r_c = 2.5$.  The initial positions of the
particles are chosen to be the vertices of a square lattice that fills
the simulation box.  For instance, the length of each square in the
lattice can be chosen to be $\ell/\lceil n^{1/2} \rceil$.  The initial
velocities are sampled from the Maxwell distribution. The thermostat
parameter is set equal to $\gamma = 1.0$.

In the simulations we estimate the true velocity autocorrelation over a time-interval $[0,1]$.
Set $N=\lfloor T / h \rfloor$ and introduce an evenly-spaced mesh in computational 
time $t_k = h k$ for all $0 \le k \le N$.   Let  $A^{h}: [0, 1] \to \mathbb{R}$ denote the velocity
correlation function obtained by the Metropolized integrator \eqref{mvi}: \[
A^{h}(\tau) = \lim_{N \to \infty}
\frac{1}{N} \sum_{k=1}^{N} \boldsymbol{P}_{\lfloor ( \tau + t_k ) / h \rfloor} ^T  
\boldsymbol{P}_{\lfloor ( t_k ) / h \rfloor}  \;, ~~~ (\tau \ge 0) \;.
\]
Define the relative Richardson error as \begin{equation} \label{richardsonerror}
\epsilon_h \eqdef \frac{\sup_{\tau  \in [0,1]} | A^{h}( \tau)  - A^{2h}(\tau) |}{ \sup_{\tau \in [0,1]} | A(\tau) |} \;.
\end{equation} An empirical estimate of $\epsilon_h$ is plotted for time-stepsizes \[
h=\{0.005, 0.0025, 0.00125, 0.000625\}
\] in Fig.~\ref{fig:AutocorrelationLennardJonesFluid} with $N=10^8$.  The denominator in $\epsilon_h$
is calculated by using the approximation of $A$ at $h=0.000625$.
The figure shows that the error of the Metropolized 
integrator is approximately $O(h^{2})$.  Moreover it shows that the Metropolized integrator based on a 
per particle partition is nearly an order of magnitude more accurate than the 
algorithm based on a trivial partition.

%%%%%%%%%%%%%%%%%%%%%%%%%%%%%%%%%%%%%%%%%%%%%%%%%

\subsection{Scaling of Metropolized Integrator}

Until now the numerics dealt with a fixed number of particles in $d=2$ dimensions.  Next we 
address how the integrator \eqref{mvi} scales with the number of particles keeping the density and 
temperature fixed.   As before we consider two types of partitions: per particle and trivial.  
This time we will assume the particles are in three-dimensional space.  Recall 
that in the former proposed moves in the Metropolis steps are obtained by per particle 
Verlet updates, and in the latter by global Verlet updates.  We will show that the type
of proposed move affects the scalability of the Metropolized integrator.  In particular, the per particle
partition will lead to a scalable algorithm.

By fixing the density and temperature, the stiffness of the molecular
system is fixed.  Thus, from the viewpoint of numerical analysis, the
time-stepsize ought to be independent of system size.  However, the
acceptance probability in the Metropolized integrator depends on the
change in energy induced by the proposal move (see
\eqref{acceptanceprobability}).  If this proposal move is global, then
the magnitude of this change in energy increases with system size.
Hence, the acceptance probability is inversely related to system size,
in general.  This poor scaling of Metropolis methods based on global
moves is well-known in the literature (see, e.g., \cite{RoRo1998,
  BePiRoSaSt2010, MaPiSt2010}).

For the numerical experiment, we set the fluid's density $\rho=0.8442$
and temperature $\mathcal{T}=0.728$ (as before units are
dimensionless).  The mean acceptance probability is computed along a
long time trajectory of the integrators ($10^6$ steps) with a fixed
time-stepsize of $h=0.01$ and a variety of system sizes.  The initial
positions of the particles are the vertices of a cubic lattice
contained in the simulation box.  The length of each cube is given by
$\lceil (1/\rho)^{1/3} \rceil$ ($\rho=0.8442$ is the density of the
fluid).  The initial velocities are sampled from the Maxwell
distribution.  The thermostat parameter and Lennard-Jones cutoff
distances are set equal to $\gamma = 1.0$ and $r_c =2.5$,
respectively.

Figure~\ref{fig:Scaling} shows the outcome of the experiment: the mean
acceptance probability per particle for the per particle and trivial
partitions as a function of the number of particles.  The acceptance
probability for the trivial partition clearly deteriorates with system
size.  On the other hand, the acceptance probability for the per
particle partition is independent of system size.  In fact, the
acceptance probability per particle is equal to one to within round
off error.  This result seems to defy intuition since the particles
experience Lennard-Jones interaction.  But, keep in mind that these
interactions are, in fact, short-range due to the cutoff in the
Lennard-Jones potential energy.  Hence, the change in energy induced
by per particle Verlet moves is independent of system size.

%%%%%%%%%%%%%%%%%%%%%%%%%%%%%%%%%%%%%%%%%%%%%%%%%

\subsection{Autocorrelation of Lennard-Jones Dumbbells}

A rigid dumbbell is a type of molecule that involves a holonomic constraint.
It consists of a pair of particles constrained to a fixed distance from one another.  
The constraint arises when the spring joining a flexible dumbbell 
infinitely stiffens.   Consider $n$ identical dumbbells where particles in
separate dumbbells interact via a Lennard-Jones potential energy \eqref{LJ}.

For the numerical experiment, we simulate the dumbbells in a
simulation box of length $\ell$ and $d=2$ dimensions as before.  We
set the system's density $\rho=0.998$ and temperature
$\mathcal{T}=3.0$ (units are dimensionless as described earlier).  We
also fix the number of dumbbells to $n=30$ and the length of each
dumbbell to $\ell_0 = 1$.  If the positions of the ith pair of
particles describing the ith dumbbell are $\boldsymbol{q}_{i,1}$ and
$\boldsymbol{q}_{i,2}$, then the constraint function associated to the
ith dumbbell is given by:
\[
g_i(\boldsymbol{q}) = | (\boldsymbol{q}_{i,1}  - \boldsymbol{q}_{i,2})\bmod \ell |^2  - \ell_0^2, ~~~i=1,\cdots,n \;.
\]
The size of the simulation box is $\ell = (2 n/\rho)^{1/2} \approx
7$. The cutoff distance in \eqref{LJpair} is set at $r_c=3.0$.  The
initial positions of the dumbbells are chosen randomly in the
simulation box, but with no overlap. The initial velocities are
sampled from the Maxwell distribution constrained to be tangent to the
constraint manifold at the initial positions and temperature. The
thermostat parameter is set to $\gamma = 1.0$.

To quantify the accuracy of the numerical method, we use the
Metropolized integrator based on a per dumbbell partition.  We again
use the relative Richardson error defined in \eqref{richardsonerror}
to estimate the rate of convergence of the method.
Figure~\ref{fig:AutocorrelationLennardJonesDumbbells} shows the
velocity autocorrelation that we wish to approximate over the
time-interval $[0,1]$.  It is noticeably different from the velocity
autocorrelation in the case of the Lennard-Jones cluster without
constraints.  The figure also shows a plot of the Richardson error as
a function of time-stepsize.  The rate of convergence appears to be
approximately $O(h^{2})$.

%%%%%%%%%%%%%%%%%%%%%%%%%%%%%%%%%%%%%%%%%%%%%%%%%

\begin{figure}[ht!]
\begin{center}
\includegraphics[scale=0.7,angle=0]{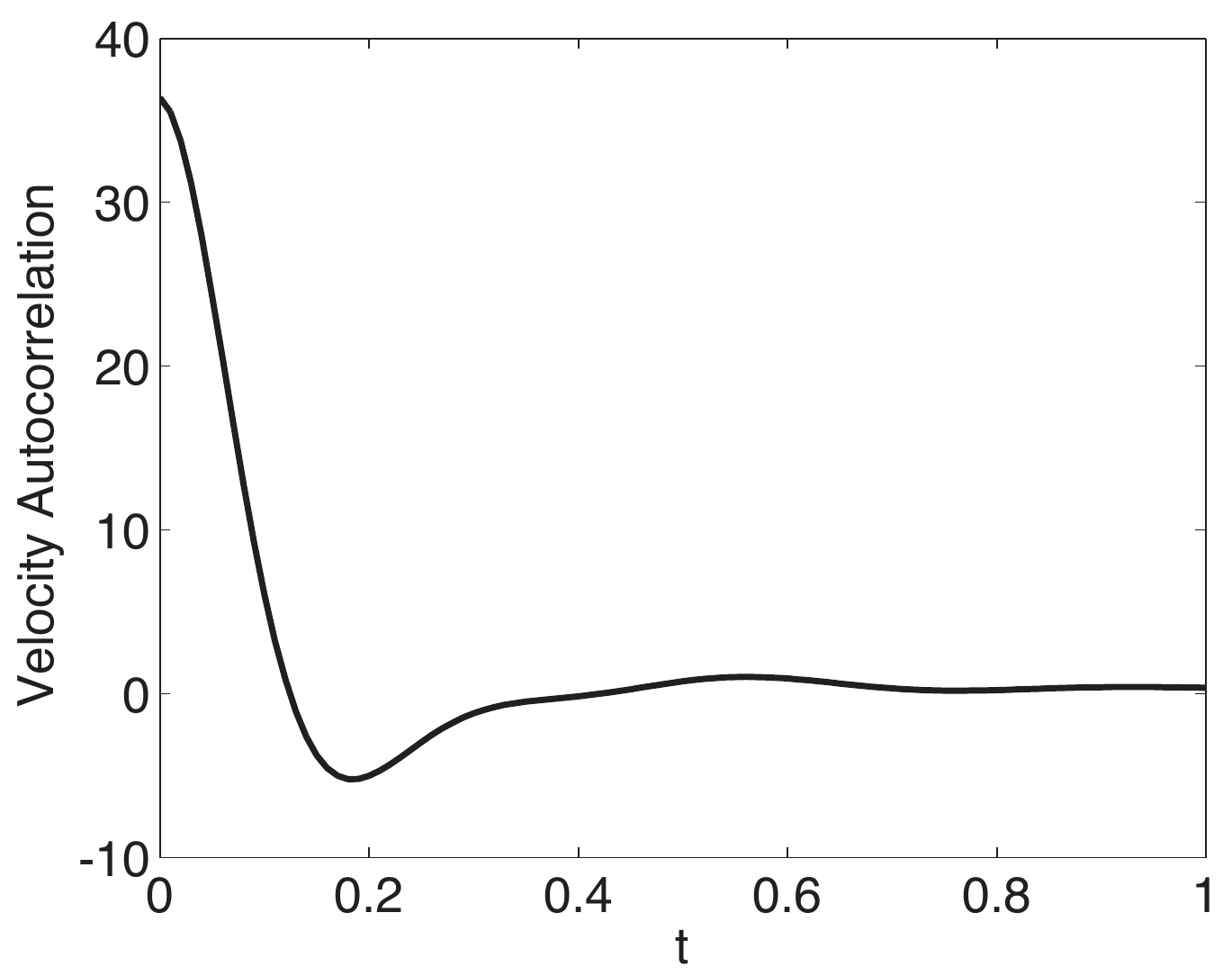}
\includegraphics[scale=0.7,angle=0]{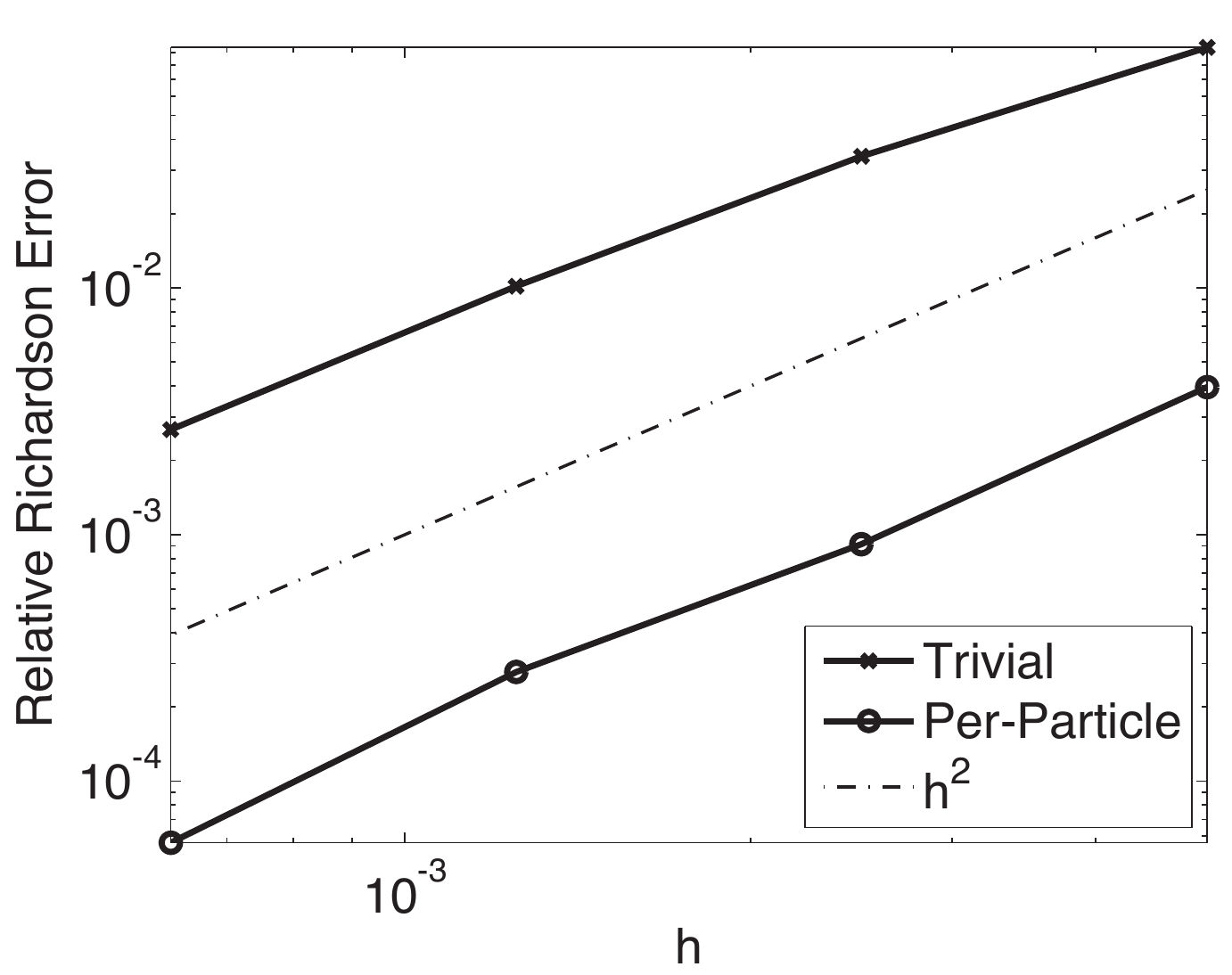}
\caption{ \small {\bf Autocorrelation of Lennard-Jones Fluid.}  The
  panel on the top shows the velocity autocorrelation function over
  the time interval $[0,1]$.  The panel on the bottom shows a loglog
  plot of the Richardson error (\eqref{richardsonerror}) of the
  Metropolized integrator based on trivial and per-particle
  partitions.  The molecular system integrated is the Lennard-Jones
  fluid with $25$ particles in a square box. The dashed line
  represents a reference slope of $h^2$.  A total of $N=10^8$ samples
  were used to obtain these empirical estimates.  Notice that the
  Metropolized integrator based on per-particle Verlet moves is about
  an order of magnitude more accurate than the Metropolized integrator
  based on global Verlet moves.  }
\label{fig:AutocorrelationLennardJonesFluid} 
\end{center}
\end{figure}

\begin{figure}[ht!]
\begin{center}
\includegraphics[scale=0.8,angle=0]{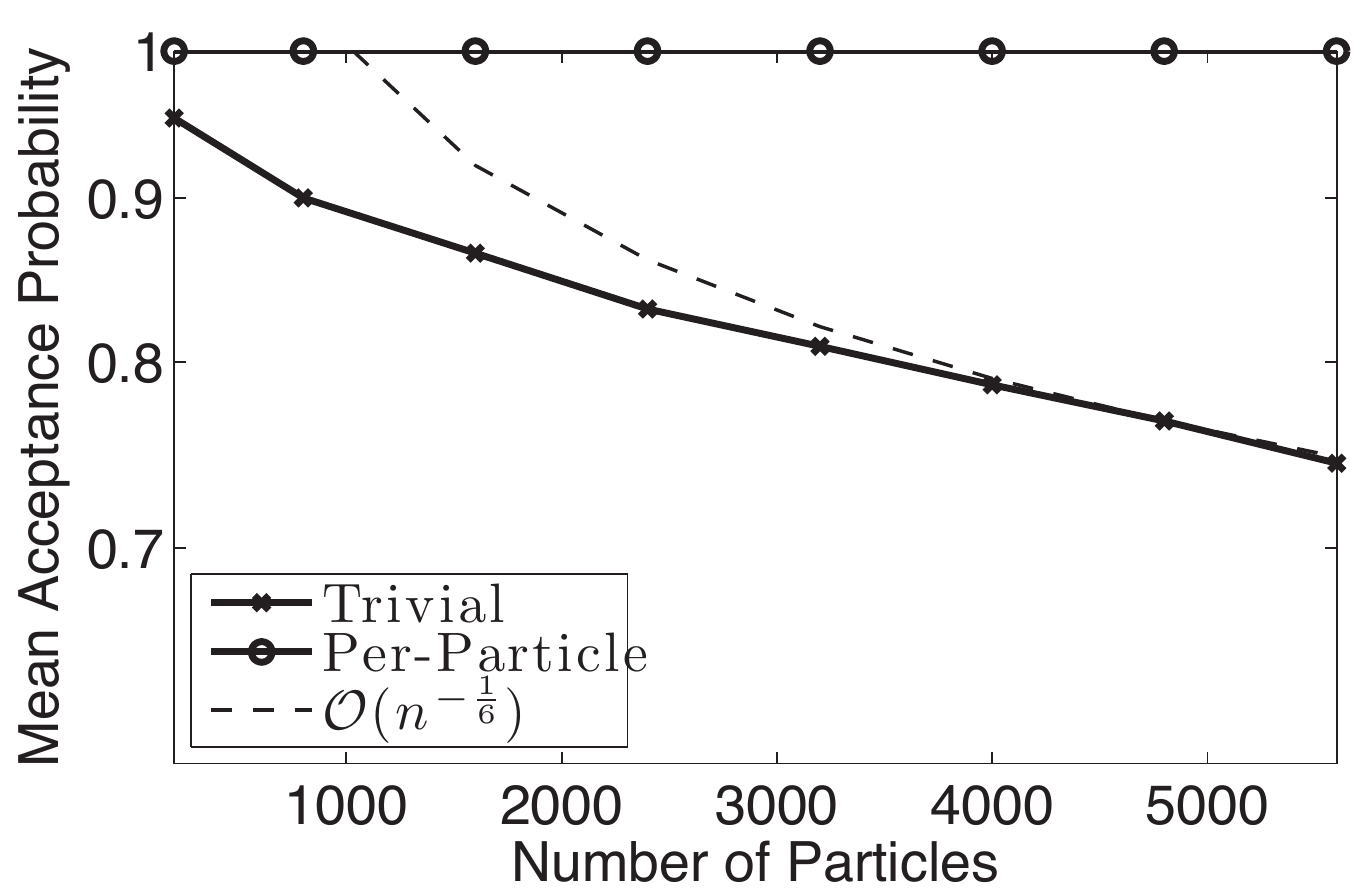}
\caption{ \small {\bf Mean Acceptance Probability as a Function of
    System Size.}  This figure shows the mean acceptance probability
  per particle for the integrator \eqref{mvi} based on a trivial and
  per particle partition.  From the plot it is clear that a) the
  acceptance probability for the integrator based on a per particle
  partition is independent of system size and b) the acceptance
  probability for the trivial partition scales like $n^{-1/6}$ where
  $n$ is the number of particles.  }
\label{fig:Scaling} \end{center}
\end{figure}

\begin{figure}[ht!]
\begin{center}
\includegraphics[scale=0.7,angle=0]{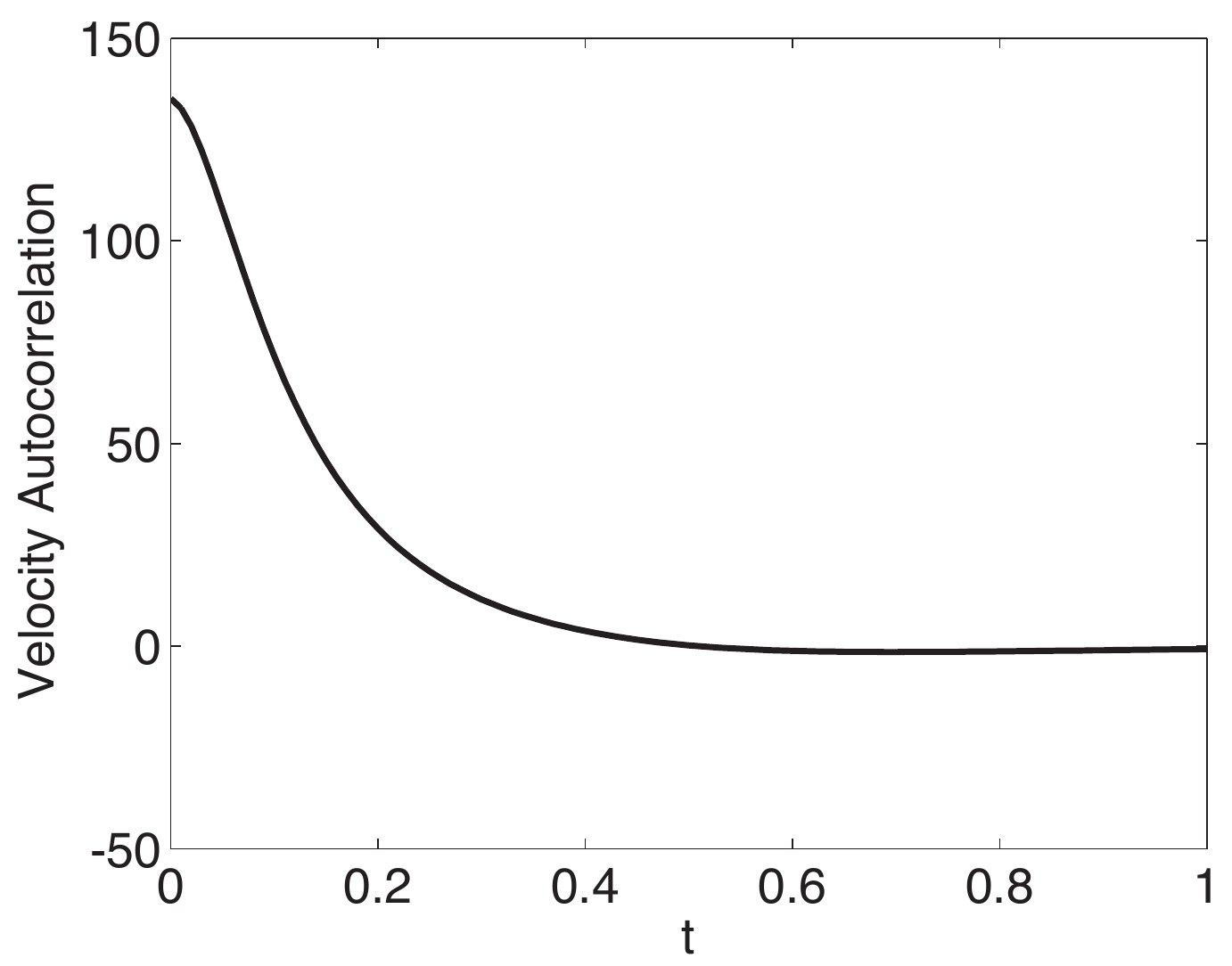}
\includegraphics[scale=0.7,angle=0]{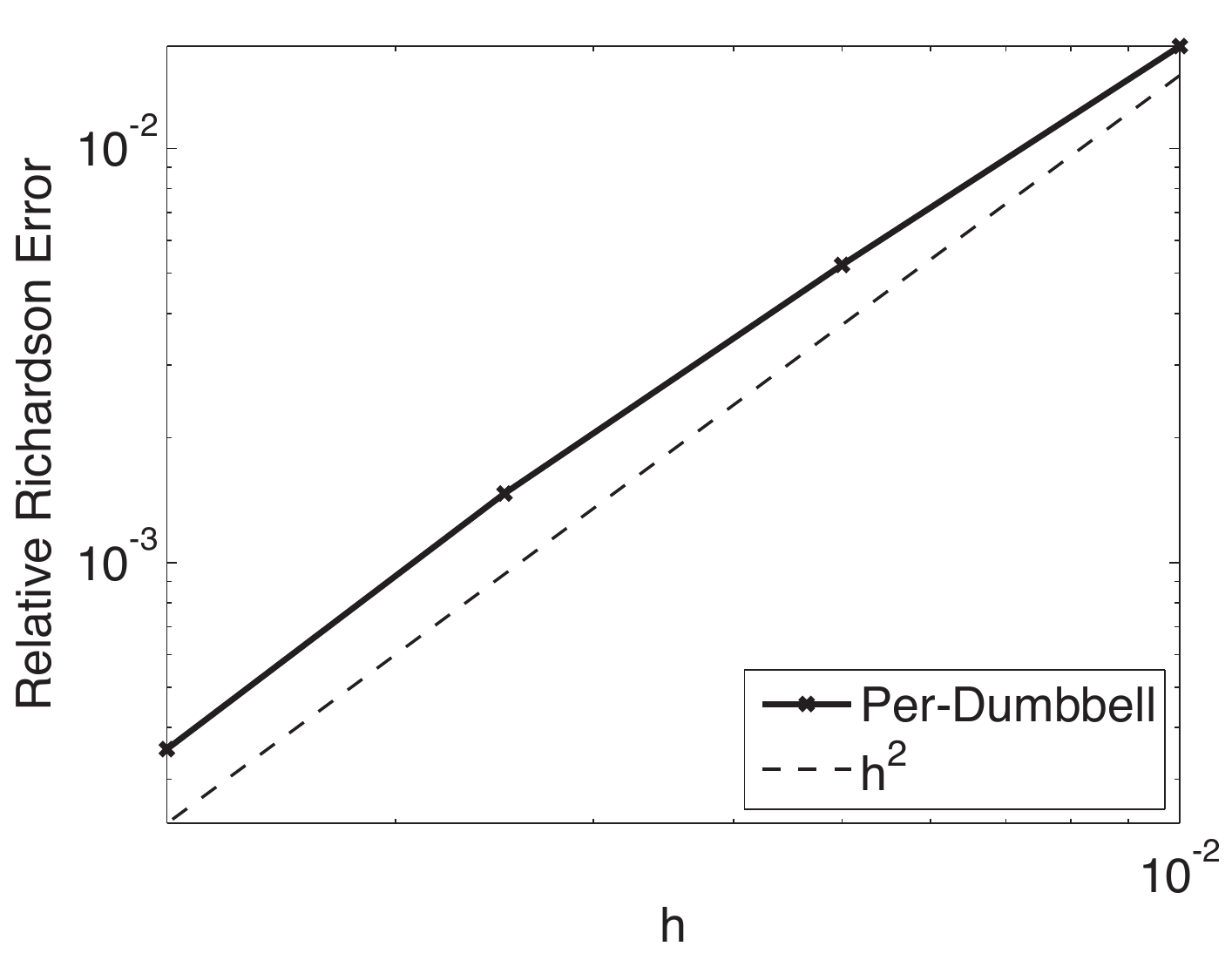}
\caption{ \small {\bf Autocorrelation of Lennard-Jones Dumbbells.}
  The panel on the top shows the velocity autocorrelation function.
  The panel on the bottom shows a loglog plot of an empirical estimate
  of the Richardson error (\eqref{richardsonerror}) of the
  Metropolized integrator based on per-dumbbell RATTLE proposed moves.
  The molecular system integrated is the Lennard-Jones system of $15$
  dumbbells in a square box. The dashed line represents a reference
  slope of $h^{2}$.  A total of $N=10^8$ samples were used to obtain
  this estimate.  }
\label{fig:AutocorrelationLennardJonesDumbbells} 
\end{center}
\end{figure}

%%%%%%%%%%%%%%%%%%%%%%%%%%%%%%%%%%%%%%%%%%%%%%%%%
%%%%%%%%%%%%%%%%%%%%%%%%%%%%%%%%%%%%%%%%%%%%%%%%%
%%%%%%%%%%%%%%%%%%%%%%%%%%%%%%%%%%%%%%%%%%%%%%%%%

\section{Conclusions} \label{sec:conclusion}

This paper showed how thinking probabilistically helps to design good
integrators for SDEs arising in molecular dynamics.  The paper brought
ideas from Monte Carlo into molecular dynamics, to obtain a
Metropolized integrator that is unconditionally stable, pathwise
accurate, and still explicit.  While the examples and theory in the
paper focused on Langevin dynamics, the methodology can be extended to
other thermostats.  The patch we propose is simple and, as we
  showed, its computational overhead is minimum compared to an
  `unpatched' integrator.

An open question about the Metropolized integrator is its convergence
rate to equilibrium.  In general, it is not expected that the
Metropolized integrator inherits all of the mixing properties of the
exact solution to the SDE. The main reason is conditional stability of
the underlying Verlet integrator that generates proposal moves.
Indeed for any time-stepsize one can find an energy value above which
the drift in this integrator gives proposed moves that increase the
energy, in contrast to the exact drift in the SDE which always centers
the solution towards lower energy values.  Since higher energy values
have a lower equilibrium probability weight, these proposed moves are
typically rejected.  While these rejections ensure that the
Metropolized integrator is ergodic, at high energy values they prevent
the integrator from inheriting all of the mixing properties of the
true solution.

This issue has been addressed for the MALA algorithm in the overdamped
limit of Langevin dynamics \cite{BoHaVa2010}.  It turns out that MALA
converges to its equilibrium distribution at an exponential rate up to
terms exponentially small in time-stepsize.  However, in the
overdamped limit positions are no longer differentiable, momentum is
no longer present, and the MALA algorithm does not involve momentum
flips.  Intuitively one expects these momentum flips to reduce
stagnation at high energy values, and hence, enhance the mixing rate.
Future research will investigate the effect of these momentum flips to
this mixing rate.

Another open question concerns the acceptance probability of the
Metropolized integrator as the dimension tends to $\infty$.  The
numerical experiment in Figure~\ref{fig:Scaling} indicates that the
acceptance probability of the Metropolized integrator based on global
Verlet moves scales as $\mathcal{O}(n^{-1/6})$ where $n$ is the number
of particles.  Thus, to obtain an $\mathcal{O}(1)$ acceptance
probability the time stepsize should be proportional to $n^{-1/6}$,
and the integrator would require $\mathcal{O}(n^{1/6})$ steps to
traverse phase space.  This scaling property would imply that the
Metropolized integrator is more efficient at making $\mathcal{O}(1)$
moves in state space as compared with the $\mathcal{O}(n^{1/4})$ and
$\mathcal{O}(n^{1/3})$ scalings of respectively hybrid Monte-Carlo and
MALA \cite{BePiRoSaSt2010}.  This question will also be investigated in future work.

\paragraph{Acknowledgements}

We wish to thank Gerard Ben-Arous,  Weinan E, Martin Hairer, 
Mikael Rechtsman, Christof Sch\"{u}tte, Andrew Stuart, Maddalena Venturoli, 
and Jonathan Weare for stimulating discussions. 
The research of NBR was supported in part by NSF Fellowship DMS-0803095. 
The research of EVE was supported in part by NSF grants DMS-0718172 and DMS-0708140, 
and ONR grant N00014-04-1-6046.

\appendix
\section*{Appendix}

\begin{proof}[Proof of Corollary~\ref{MAGLAcorrelationaccuracy}]
We prove this estimate for the equilibrium correlation of any Lipschitz function
 $g \in C^{0,1}(\Omega, \mathbb{R})$.  This assumption 
 includes the special case of a scalar velocity autocorrelation function.  Ergodicity of the Metropolized integrator 
 (see Theorem~\ref{MAGLAergodicity}) implies that
\[
\lim_{N \to \infty}  \frac{1}{N} \sum_{k=1}^{N} g(\boldsymbol{X}_{\lfloor t_k/h \rfloor}) g(\boldsymbol{X}_{\lfloor (t_k + \tau)/h \rfloor} ) 
= \left\langle g(\boldsymbol{X}_{\lfloor t/h \rfloor}) g(\boldsymbol{X}_{\lfloor (t + \tau)/h \rfloor} )  \right\rangle
\]
for any $t \ge 0$.  (Recall, that the angle brackets denote a double average with respect to initial conditions distributed 
according to the equlibrium distribution of \eqref{langevin} and realizations of the Wiener process.)
Thus, we wish to estimate:
\begin{align*}
\epsilon_h &\eqdef  \left|  \left\langle g(\boldsymbol{X}_{\lfloor t/h \rfloor}) g(\boldsymbol{X}_{\lfloor (t + \tau)/h \rfloor} )  \right\rangle 
-  \left\langle g(\boldsymbol{Y}(t)) g(\boldsymbol{Y}(t + h \lfloor \tau/ h \rfloor ))  \right\rangle  \right|  \\
& =  \left|  \left\langle g(\boldsymbol{x}) \left( g(\boldsymbol{X}_{\lfloor \tau/h \rfloor} ) - g(\boldsymbol{Y}( h \lfloor \tau/ h \rfloor )  \right)  ) \right\rangle  \right|  \;.
\end{align*} 
By the Cauchy-Schwarz inequality,
\begin{align*}
&\epsilon_h  \le  C_g 
\left(\int_{\Omega} 
\left| \E^{\boldsymbol{x}} \left\{ g(\boldsymbol{X}_{\lfloor \tau/h \rfloor} ) - g(\boldsymbol{Y}( h \lfloor \tau/ h \rfloor))  \right\} \right|^2 d \mu  \right)^{1/2} \;,
\end{align*}
where $C_g =  ( \int_{\Omega} | g( \boldsymbol{x} ) |^2 d \mu )^{1/2}$.
By Jensen's inequality,
\begin{align*}
&\epsilon_h  \le  C_g  
\left(   \E^{\mu}\left\{ \left| g(\boldsymbol{X}_{\lfloor \tau/h \rfloor}) -g(\boldsymbol{Y}(h \lfloor \tau/ h \rfloor) ) \right|^2 \right\}  \right)^{1/2} \;.
\end{align*}
The Lipschitz assumption on $g$ implies,
\[
| g( \boldsymbol{x} ) - g( \boldsymbol{y} ) | \le L_g | \boldsymbol{x} - \boldsymbol{y} |  
\]
for some constant $L_g > 0$ and for every $\boldsymbol{x}, \boldsymbol{y} \in \Omega$.  
Hence,
\begin{align*}
\epsilon_h  \le  L_g C_g
\left(   \E^{\mu}\left\{ \left| \boldsymbol{X}_{\lfloor \tau/h \rfloor} -\boldsymbol{Y}(h \lfloor \tau/ h \rfloor)  \right|^2 \right\}  \right)^{1/2} \;.
\end{align*}
Pathwise convergence of the Metropolized integrator (see Theorem~\ref{MAGLAaccuracy}) implies
\begin{align*}
\left(   \E^{\mu}\left\{ \left| \boldsymbol{X}_{\lfloor \tau/h \rfloor} -\boldsymbol{Y}(h \lfloor \tau/ h \rfloor)  \right|^2 \right\}  \right)^{1/2} \le C(T) h \;.
\end{align*}
From which it follows, that the accuracy of the Metropolized integrator in computing the equilibrium correlation in $g$ 
is $O(h)$ with a prefactor that depends on the function $g$. 
\end{proof}

\bibliographystyle{amsplain}
\bibliography{nawaf}

 \end{document}